\providecommand{\N}{\mathbb{N}}
\providecommand{\R}{\mathbb{R}}
\providecommand{\B}{\mathbb{B}}
\providecommand{\Z}{{\mathbb{Z}}}
\renewcommand{\P}{\mathbb{P}}
\providecommand{\E}{\mathbb{E}}
\providecommand{\M}{\mathbb M}
\providecommand{\K}{\mathbb K}
\providecommand{\MM}{\mathbb M \times \mathbb M'}
\providecommand{\Mi}{M^{\ms{in}}}
\providecommand{\Mo}{M^{\ms{out}}}
\providecommand{\LL}{\mathbb L}
\providecommand{\NM}{\mathbb{N}_{\M}}
\providecommand{\NK}{\mathbb{N}_{\K}}
\providecommand{\NMM}{\mathbb{N}_{\MM}}
\providecommand{\T}{\mc{T}}
\providecommand{\Ta}{\mc{T}_{\ms{adm}}}
\providecommand{\shift}{S} 
\renewcommand{\d}{{\rm d}}
\providecommand{\ol}{\overline}
\providecommand{\opt}{\ms{opt}}
\providecommand{\es}{\emptyset}
\providecommand{\one}{\mathbbm{1}}
\providecommand{\mc}{\mathcal}
\providecommand{\T}{\mc T}
\providecommand{\Xis}{\Xi^{\e}}
\providecommand{\mmax}{\ms{max}}
\providecommand{\opt}{\ms{opt}}
\def\wt{\widetilde}
\newcommand{\ind}{\mathbbm{1}}
\providecommand{\g}{\gamma}
\providecommand{\gh}{\gamma_\xi}
\providecommand{\e}{\varepsilon}
\providecommand{\la}{\lambda}
\providecommand{\ms}{\mathsf}
\newcommand{\vM}{\check{M}}
\newcommand{\exclude}[1]{}
\newtheorem{theorem}{Theorem}[section]
\newtheorem{proposition}[theorem]{Proposition}
\newtheorem{lemma}[theorem]{Lemma}
\theoremstyle{definition}
\keywords{Marking, caching, Gibbs, tightness, phase transition, stationary point process}
\subjclass[2010]{60G55; 60D05}
\begin{document}

\author{Bart{\l}omiej B{\l}aszczyszyn}
\author{Christian Hirsch}
\address[Bart{\l}omiej B{\l}aszczyszyn]{Inria and DI ENS/PSL University, 2 rue Simone Iff, CS 42112, 75589 Paris Cedex 12, France}
\email{Bartek.Blaszczyszyn@ens.fr}
\address[Christian Hirsch]{Bernoulli Institute, University of Groningen, Nijenborgh 9, 9747 AG Groningen, The Netherlands}
\email{c.p.hirsch@rug.nl}

\title{Optimal stationary markings}

\date{\today}

\begin{abstract}
Many specific problems ranging from  theoretical probability to applications in statistical physics, combinatorial optimization and communications can be formulated as an  optimal tuning of local parameters
in large systems of interacting particles. Using the framework of stationary  point processes in the Euclidean space, we  pose it as a problem of an optimal stationary marking of a given stationary point process.
The quality of a given marking is evaluated in terms of scores  calculated in a covariant manner for all points in function of the proposed  marked configuration. In the absence of total order of the configurations of scores,  
we identify intensity-optimality and local optimality as two natural ways  for defining optimal stationary marking. 
We derive tightness and integrability conditions under which intensity-optimal markings exist and further stabilization conditions making them equivalent to locally optimal ones.  We present examples   motivating the proposed, general framework. Finally, we discuss various possible approaches  leading to  uniqueness results. 
\end{abstract}

\maketitle
\tableofcontents

\section{Introduction}

%
%
 This paper discusses a novel class of stationary markings of stationary point and particle processes. These markings are thought of as \emph{optimal} in the sense that they  maximize a score function calculated in a translation-covariant way  over all points of the point process. As a simple illustration, consider 
 a stationary germ-grain process and ask for its stationary thinning that maximizes the volume fraction of the retained grains under their non-overlapping (hard-core) constraint. It is an instance of an optimal binary marking, with the marks indicating whether or not the  grains are retained in the  thinned process. The score function is non-null only for the retained grains, returning their volumes provided no overlapping in the thinned process, and $-\infty$ otherwise. This is the volume-maximal hard-core thinning  considered  in~\cite{thinning}.
 Many other problems arising in various domains of science and engineering can be formulated as an optimal stationary marking with appropriate marks and score functions. Several examples will be presented in this paper.

%
%
Prior to developing algorithms for finding optimal solutions, in this paper we address a more fundamental question: What do we mean by an optimal marking of a stationary process? While in a finite domain, one can simply impose  that the sum of the scores evaluated for all points in the domain should be maximized, this procedure is ill-defined for stationary (hence infinite) systems.  Inspired by~\cite{thinning} and some specific problems considered in the literature, we consider the following two natural ways of defining an optimal marking in the stationary setting.
A stationary marking is:
\begin{itemize}
\item \emph{Intensity-optimal}
if it  maximizes the score-intensity, i.e.,  the expected score of the typical point or, equivalently, the expected aggregated scores of all points in a unit volume.
Existence of an  intensity-optimal marking is hence equivalent to the existence of a stationary marking attaining the supremum of the score-intensities
over all stationary markings. 
\item  \emph{Locally optimal} if, roughly speaking, one cannot achieve a net increase of the sum of scores by changing the marks of finitely many points only.
\end{itemize}

We address the following key questions regarding these two notions of optimality.
\begin{enumerate}
	\item	\emph{Do intensity- and locally optimal markings exist?}\\
	\item	\emph{Are  the two notions equivalent?}\\
	\item	\emph{Are these optimal markings  unique in distribution?}\\
\end{enumerate}
We provide general sufficient conditions for questions (1) and (2), and verify them in a variety of examples. On the other hand, as in Gibbsian systems, the question on uniqueness is of a different flavor, and we expect the answers to be model-specific. A relation between uniqueness and the absence of suitable continuum percolation was already noted in \cite{thinning}. In the present work, we highlight further mechanisms that may lead to uniqueness, for instance in  a rather specific scenario, allowing one to use the mass transport principle to represent the score-intensity. We also provide an example for non-uniqueness when the underlying space is a tree.

%
%
First original techniques for establishing existence and equivalence of intensity- and locally optimal markings have been developed in \cite{thinning} for the volume-maximal hard-core thinning  problem that served us as an illustration. However, new challenges arise for general optimal markings. Firstly,  while the binary mark space of the thinnings is compact, we need appropriate tightness conditions in the general case. Secondly, our framework makes it possible to incorporate non-local dependence of scores on marks.  To deal with such cases, in this paper we revisit the  concept of stabilization that  has already proven useful for establishing various limit results in stochastic geometry \cite{stab3, stab4}. In its original form, the stabilization requires, loosely speaking,  that changes to marks should only impact the scores up to a finite, possibly random range. We relax this condition by allowing infinitely long impact range, as long as the accumulated changes of the scores outside a finite, random range stay small. In addition to these extensions, we also describe an approximation method to deal with possibly discontinuous score functions.

%
%
The remaining part of the manuscript is organized as follows. First, in Section \ref{modelSec}, we introduce the concepts of intensity-optimal and locally optimal markings, state the main existence, equivalence and uniqueness results and provide a variety of examples.  Sections \ref{existSec} and \ref{equivSec} contain the proofs of the existence and equivalence results, respectively. Appendix \ref{appendix} contains a general Palm-convergence result that may be of independent interest.

    \section{Model definition and main results}
\label{modelSec}

\subsection{Definitions}
\label{defSec}
Throughout the manuscript, $\Phi = \{(X_i, M_i)\}_{i}$ shall always denote a given, simple, stationary
$\M$-marked point process, where $\M$ is some locally compact, second countable, Hausdorff space.
That is,
$\Phi$ is a random variable in the space $\NM$ of locally finite
subsets of $\R^d \times \M$ endowed with the evaluation
$\sigma$-algebra~\cite{pp1}, with any set $\{x\}\times\M$, $x \in \R^d$, containing at
most one element of $\Phi$. 
Moreover, $\Phi$ is stationary in the sense that its distribution $\mc L(\Phi)=\mc L(S_y\Phi)$ is invariant with respect to shifts $\shift_y$ pushing atoms $(x,m)\in\R^d\times\M$ by any vector $y\in \R^d$, $(x,m) \mapsto (x - y,m)$, while preserving marks. We call $\Phi$ the {\em
base process} and denote by
\begin{equation}\label{e.intensity}
  \lambda = \E[\Phi ([0,1]^d \times \M)]
\end{equation}
its intensity, which is assumed to be finite and positive throughout the manuscript.

We consider a setting, where the base process $\Phi$ is enhanced by additional marks from 
a locally compact, second countable, Hausdorff space~$\M'$, which is in general different from $\M$. 
In the example of the germ-grain model, $\M$ is the space of compact, convex grains, while binary marks in $\M' = \{0, 1\}$ indicate whether or not the grains are retained in the thinned process.

In this regard,
we extend $\NM$ to $\NMM$ ---
the space of locally finite
subsets of $\R^d \times \M\times \M'$ endowed with the respective evaluation 
$\sigma$-algebra and say that 
$\psi\in\NMM$ is a $\M'$-{\em marking}
(or just {\em marking} for short) of 
the base configuration $\varphi\in\NM$ if 
$\varphi=\pi(\psi)$, where 
$\pi$ is the canonical projection of $\NMM$ to $\NM$. We naturally extend the shifts $\shift_y$ making them transport doubly-marked atoms: $(x,m,m') \mapsto (x - y,m,m')$
while preserving both marks.

An $(\MM)$-marked point process $\Psi = \{(X_i, M_i, M_i')\}_{i}$ is 
stationary if $\mc L(\Psi)=\mc L(S_y\Psi)$ for all $y\in\R^d$.
$\Psi$ is a \emph{stationary $\M'$-marking of $\Phi$}, in symbols $\Psi \in \T(\Phi)$, if moreover the image of its law $\mc L(\Psi)$ under the canonical projection $\pi$ coincides with $\mc L(\Phi)$; $\mc L(\pi(\Psi))=\mc L(\Phi)$.~\footnote{\label{fn.Probability-space} Note that a marking $\Psi$ is not necessarily a deterministic function of $\Phi$
and we allow it to be defined 
on some new  probability space.
If this is the case, it is customary to "re-define" the base process on this new space as the projection $\Phi:=\pi(\Psi)$ 
of $\Psi$ and, slightly abusing the notation, use the same expectation $\E$ as in~\eqref{e.intensity} regarding the new probability space. We assume this convention throughout the paper.}


To measure the quality of markings, we use a \emph{score function}
$\xi$ mapping from the set 
$$\NMM^0 := \{\psi\in\NMM:\psi(\{o\}\times \M \times \M') = 1\}$$
of all realizations of $\Psi$ having a point at the origin $o\in\R^d$
 either to $[0, \infty) \cup
 \{-\infty\}$ or to $[-\infty, 0]$.
 For $\psi\in\NMM^0$, $\xi(\psi)$
 represents the score of the point at the origin~$o$
 and depends, possibly, on the entire configuration of points with their double marks in $\psi$. The 
 score value $-\infty$ represents an inadmissible marking (for example, leading to some overlapping of grains when a hard-core thinning is required).
Using the covariance property, we extend the evaluation of the scores to all points $(x_i,m_i,m_i')\in\psi$ of any configuration $\psi\in\NMM$
$$\xi_i := \xi(\shift_{x_i}\psi).$$
Note, $\xi_i$ represents yet another stationary marking (enhancement) of the base process, this time a deterministic function of $\Psi$.

 A stationary marking $\Psi = \{(X_i, M_i, M_i')\}_{i}$ is \emph{admissible}, in symbols $\Psi \in \Ta(\Phi)$, if almost surely $\xi(\shift_{X_i}\Psi) \ne -\infty$ for all $i \ge 1$.

For a given base process $\Phi$, marking space~$\M'$, and a score function~$\xi$, we want to 
investigate (admissible) stationary
markings $\Psi$ maximizing the scores $\xi_i$ in one of the two meanings
described in what follows.
\subsubsection{Intensity-optimal marking}
For a given stationary marking~$\Psi$ of $\Phi$ and a score function~$\xi$, the \emph{$\xi$-intensity}
\begin{equation}\label{eq:xi-intensity-stat}
\gh(\Psi) := \E \Big[\sum_{\substack{X_i \in \Psi \cap
 [0,1]^d}}\xi(\shift_{X_i}\Psi)\Big]\,.
\end{equation}
is the expected total score of the points in a unit cube.
Here, and in what follows we often abuse notation and write $\Psi \cap [0, 1]^d$ for the projection of $\Psi \cap ([0, 1]^d \times \MM)$ to $\R^d$.

Equivalently,
\begin{equation}
\label{eq:xi-intensity-palm}
\gh(\Psi) = \lambda \E^0[\xi(\Psi)] =\lambda \E^0[\xi_0],
\end{equation}
where
$\E^0[\xi_0]$ is the expected $\xi$-score of the typical 
point of $\Phi$, that is of the point located at the origin
$X_0 = o$ under the Palm probability $\P^0$ related to the points of the basis process; cf~\cite[Section~10.2]{blaszczyszyn2017lecture}).~\footnote{ $\P^0$ is defined on the probability space of $\Psi$ with $\Phi=\pi(\Psi)$, see Footnote~\ref{fn.Probability-space}. }

Let 
\begin{equation}\label{eq:xi-intensity-sup}
\g_{\xi, \ms{opt}} := \sup_{\mc L(\Psi) \in \T(\Phi)}\gh(\Psi)\,
 \end{equation}
 denote the \emph{optimal $\xi$-intensity} associated with $\Phi$.
A marking $\Psi$ is \emph{intensity-optimal} if 
$$\gh(\Psi) = \g_{\xi, \ms{opt}}.$$ 
Note that maximizing the $\xi$-intensity for a given base process $\Phi$ (of intensity $\lambda$) is equivalent to maximizing the expected score of its typical point~$\E^0[\xi_0]$.

A priori, it is not clear whether the supremum in~\eqref{eq:xi-intensity-sup} is attained. Confirmation of this property, under appropriate conditions regarding the score function, is our first result presented in Theorem~\ref{existThm} in Section~\ref{s.results}.

\subsubsection{Locally optimal marking}
Optimizing the intensity is only one approach towards optimal stationary markings.
 Alternatively, we work on the level of realizations.
In this regard, for two markings
$\psi, \psi' \in \NMM$ of the same base configuration $\varphi=\pi(\psi)=\pi(\psi')$
define the {\em score difference} 
$$
 \psi' \Delta_{\xi,x} \psi := \xi(\shift_x \psi') - \xi(\shift_x \psi)
$$
evaluated at the point $x\in\varphi$ 
and the aggregated score difference for the entire base configuration
\begin{align}
	\label{scoreDiff}
\psi'\Delta_{\xi} \psi:= \sum_{x \in \varphi} \psi' \Delta_{\xi, x} \psi.
\end{align}
In general, the infinite series 
in~\eqref{scoreDiff} may be not well-defined as the summands could take arbitrary positive or negative values. This issue disappears if 
$\sum_{x \in \varphi} (\psi' \Delta_{\xi, x} \psi)_+ < \infty$,
where $(a)_+=\max(0,a)$. 
We adhere to the convention $(-\infty - (-\infty))_+ = 0$ meaning that there is no penalty in the score difference if $\xi(\psi) = \xi(\psi') = -\infty$, i.e., if $\psi$ and $\psi'$ are both non-admissible. With this notation, a marking $\psi\in\NMM$ is \emph{locally optimal} marking if
$$ \psi' \Delta_{\xi} \psi \le 0
$$
holds for every marking $\psi'\in\NMM$
of the same base configuration (i.e., $\pi(\psi')=\pi(\psi)$) satisfying $\#(\psi' \setminus \psi) < \infty$ (i.e., marking $\psi'$ is different from $\psi$ at only finite number of points of the base configuration)
and $\sum_{x \in \pi(\psi)} (\psi' \Delta_{\xi, x} \psi)_+ < \infty$.
A stationary marking $\Psi$ is \emph{locally optimal} if it satisfies the above condition almost surely.

\bigskip
 %
 %
Having introduced two different approaches towards optimality begs the question as to when they are equivalent? 
Whereas in the thinning context \cite{thinning}, the distribution of grain sizes appearing in the thinning is the decisive factor, in the generalized setting of optimal markings internal and external stabilization~\cite{stab3} of the score function plays an important role. Loosely speaking, external stabilization means that modifying the mark of one point does not change much the scores outside a bounded neighborhood around that point. Conversely, internal stabilization means that changing the markings outside a bounded neighborhood of a point only has a small influence on the score of the considered point. 
Our second main results, formulated in Theorems~\ref{equiv1Thm} and~\ref{equiv2Thm} in Section~\ref{s.results}, provide sufficient stabilization and moment conditions for the equivalence of intensity- and locally optimal markings.

While Theorems \ref{existThm}--\ref{equiv2Thm} build a solid foundation for the existence of intensity- and of locally optimal markings, the question of distributional uniqueness opens up a wide field of research questions. We touch upon it in Section~\ref{s.uniqueness}, after having presented a variety of examples in Section \ref{exSec}.

\subsection{Results}
\label{s.results}
Our first main result guarantees existence of an intensity-optimal marking under tightness and integrability conditions, which allow one to construct it as a suitable subsequential limit.

	%
	%
A family of stationary markings $\{\Psi_i\}_{i \in I} \subset \T(\Phi)$ is \emph{$\M'$-tight} if the corresponding family of distributions 
of $\M'$-marks of the typical point is (uniformly) tight. In other words, for every $\e > 0$ there exists a compact $K' \subset \M'$ such that 
	\begin{align}
		\label{xisTightEq}
		\sup_{i \in I} \P^0_i(M_0' \not \in K') \le \e,
	\end{align}
where $\P^0_i$ corresponds to the Palm probability on the probability space on which $\Psi_i$ is defined and $M_0'$ is the $\M'$-mark of $X_0=o$.
Note, condition~\eqref{xisTightEq} is trivially satisfied when the mark space $\M'$ is compact, as e.g. for the binary marks $\M' = \{0, 1\}$.

Even if no mass of marks is lost when passing to subsequential limits, we still need to impose additional conditions to ensure that also the $\xi$-intensity is well-behaved. In that regard, a family of real, measurable functions $\{\xi_\e\}_{\e > 0}$ defined on $\NMM^0$ is a \emph{continuous approximation} to $\xi$ if
the following three conditions are satisfied:
\begin{enumerate}
	\item For every admissible $\psi \in \NMM^0$, the sequence $\xi_\e(\psi)$ is increasing in $\e$
and $\lim_{\e \to 0}\xi_\e(\psi) = \xi(\psi)$, 
	\item For each $\epsilon>0$, the discontinuity set 
	of the function $\NMM^0\owns\psi\mapsto\xi_\e(\psi)$ (in vague topology on $\NMM^0$) is a zero-set with respect to the distribution of $\Psi$ under $\P^0$, for any $\Psi \in \T(\Phi)$.
\end{enumerate}

Finally, a measurable function $\ol \xi:\, \NM^0 \to [0, \infty)$ is an \emph{integrable majorant} of $\xi$ if the following two conditions are satisfied:
\begin{enumerate}
	\item \label{cond.majorant1} $\E^0[\ol\xi(\Phi)] < \infty$,
	\item \label{cond.majorant2} $|\xi(\psi)| \le \ol \xi(\pi(\psi))=\ol \xi(\varphi)$
	for any admissible marking $\psi$ of  $\P^0$-almost all $\varphi\in\NM^0$.
\end{enumerate}

The following result will be proved in Section~\ref{ss.loc-to-int}.
\begin{theorem}[Existence of intensity-optimal markings]
 \label{existThm}
	Let $\mc L(\Phi)$, $M'$ and $\xi$ be such that:
	\begin{enumerate}
	  		 \item[{\bf ({C})}] { The set of admissible markings is a closed subset of $\NMM$ or the score function $\xi$ is upper semicontinuous on $\NMM^0$ in the vague topology},
		\item[{\bf (A)}] $\xi$ admits a continuous approximation $\{\xi_\e\}_{\e > 0}$ with all $\xi_\e$ having integrable majorants, and 
		\item[{\bf (T)}] for every $\g \in \R$ the family $\{\Psi \in \T(\Phi):\, \gh(\Psi) \ge \g\}$ is $\M'$-tight.
	\end{enumerate}
	Then, there exists at least one intensity-optimal marking.
\end{theorem}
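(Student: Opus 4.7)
The plan is to construct an intensity-optimal marking as a subsequential weak limit of a score-maximizing sequence and then verify optimality by combining Palm convergence with the continuous approximation from (A). Fix a sequence $\Psi_n \in \T(\Phi)$ with $\gh(\Psi_n) \to \g_{\xi,\ms{opt}}$; the case $\g_{\xi,\ms{opt}} = -\infty$ being vacuous, I assume the supremum is finite, so that eventually $\gh(\Psi_n) \ge \g_{\xi,\ms{opt}} - 1$. By (T) the laws $\{\mc L(\Psi_n)\}$ are $\M'$-tight, and since $\pi(\Psi_n) \stackrel{d}{=} \Phi$ has a fixed law, a standard Campbell argument upgrades this to vague tightness of $\{\mc L(\Psi_n)\}$ on the Polish space $\NMM$. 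Prokhorov then delivers a subsequence, not relabelled, with $\Psi_n \Rightarrow \Psi_\infty$.

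Next I would verify that $\Psi_\infty \in \Ta(\Phi)$. Stationarity passes to the limit because shift-invariance of the law is a closed condition, and continuity of $\pi$ gives $\pi(\Psi_\infty) \stackrel{d}{=} \Phi$. Admissibility is handled by the dichotomy in (C): when the set of admissible configurations is closed in $\NMM$, the Portmanteau theorem forces $\P(\Psi_\infty \in \Ta) = 1$; otherwise $\xi$ is upper semicontinuous, and admissibility is verified a posteriori from the finiteness of $\gh(\Psi_\infty)$ established below.

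The core step is $\gh(\Psi_\infty) \ge \g_{\xi,\ms{opt}}$. I invoke the Palm-convergence result from Appendix~\ref{appendix} to deduce that the Palm laws $\mc L^0_n(\Psi_n)$ converge weakly to $\mc L^0_\infty(\Psi_\infty)$. Fix $\e > 0$. By (A), $\xi_\e$ is continuous on a $\mc L^0_\infty(\Psi_\infty)$-full set, and its integrable majorant $\ol\xi_\e$ depends only on the base configuration $\pi(\Psi_n) \stackrel{d}{=} \Phi$, supplying uniform integrability. Hence
\begin{equation*}
\E^0_n[\xi_\e(\Psi_n)] \longrightarrow \E^0_\infty[\xi_\e(\Psi_\infty)] \qquad \text{as } n \to \infty.
\end{equation*}
Since $\xi \le \xi_\e$ on admissible configurations, $\gh(\Psi_n) \le \lambda \E^0_n[\xi_\e(\Psi_n)]$, and passing to the limit in $n$ gives $\g_{\xi,\ms{opt}} \le \lambda \E^0_\infty[\xi_\e(\Psi_\infty)]$.

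Finally I let $\e \downarrow 0$: monotone convergence (using $\xi_\e \downarrow \xi$ together with domination by some fixed integrable majorant $\ol\xi_{\e_0}$) yields $\lambda \E^0_\infty[\xi_\e(\Psi_\infty)] \to \gh(\Psi_\infty)$, so $\g_{\xi,\ms{opt}} \le \gh(\Psi_\infty)$. The reverse inequality is immediate from $\Psi_\infty \in \T(\Phi)$. I expect the main obstacle to be the Palm-convergence step: the Palm measures $\P^0_n$ are defined on distinct probability spaces, and the selection of an atom at the origin creates boundary discontinuities on $[0,1]^d$, so a naive continuous mapping theorem cannot be applied; this is precisely what the appendix result is built to handle. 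A secondary delicacy is the admissibility of $\Psi_\infty$ in the upper-semicontinuous branch of (C), since the complement of admissibility need not be closed in vague topology, which is exactly why the two-sided hypothesis is imposed.
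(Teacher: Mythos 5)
Your overall architecture is the paper's: extract a score-maximizing sequence, get weak subsequential convergence to some $\Psi_\infty$ via (T) and a Campbell-type tightness argument, invoke the appendix's Palm-convergence result, and combine the continuous approximation (A) with the integrable majorants to pass to the limit in $n$ and then in $\e$. Steps for the Palm convergence, the uniform integrability, and the final $\e\downarrow0$ chain of inequalities match the paper essentially verbatim.

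There is, however, a genuine circularity in your handling of admissibility in the upper-semicontinuous branch of (C). You defer admissibility of $\Psi_\infty$ to ``a posteriori from the finiteness of $\gh(\Psi_\infty)$ established below,'' but the step establishing $\gh(\Psi_\infty)\ge\g_{\xi,\opt}$ (and hence finiteness) is precisely the $\e\downarrow0$ limit, and that limit relies on $\xi_\e(\Psi_\infty)\downarrow\xi(\Psi_\infty)$, which the definition of a continuous approximation guarantees \emph{only on admissible configurations}. On non-admissible configurations nothing relates $\xi_\e$ to $\xi$ (take, e.g., $\xi\equiv-\infty$ but $\xi_\e\equiv 0$), so $\lambda\E^0[\xi_\e(\Psi_\infty)]$ need not approach $\gh(\Psi_\infty)$ unless admissibility is already known. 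The fix is the one you gesture at in your closing remark but do not carry out: use the upper semicontinuity of $\xi$ directly. The upper level sets $\{\xi\ge t\}$ are closed in the vague topology, so Portmanteau applied to the converging Palm laws $\Psi_n^0\Rightarrow\Psi_\infty^0$ gives, in the nonnegative case, $\P^0(\xi(\Psi_\infty)\ge 0)\ge\limsup_n\P^0(\xi(\Psi_n)\ge 0)=1$; in the nonpositive case, combine $\P^0(\xi(\Psi_\infty)\ge -M)\ge\limsup_n\P^0(\xi(\Psi_n)\ge -M)$ with the Markov bound $\P^0(\xi(\Psi_n)<-M)\le -\gamma/M$ (available since $\g_{\xi,\opt}>-\infty$ gives a uniform lower bound $\gamma$ on $\E^0[\xi(\Psi_n)]$) and send $M\to\infty$. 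Only after admissibility is secured in this way does your $\e\downarrow0$ argument close the loop. With that repair the proof is correct and is the paper's proof.
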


%
%
Next, we introduce stabilization and moment conditions for the equivalence of intensity- and locally optimal markings in the spirit of \cite{weakStab}. Specifically, define $Q_r = Q_r(o) = [-r/2, r/2]^d$ for $r>0$, and for $\e > 0$  consider measurable functions $R^\e:\,\NMM^0 \to [0, \infty)$.
We call $R^\e$ {\em stabilizing radii} of the score function $\xi$ if for $\P^0$-almost every realization $\varphi$ of $\Phi$ and any its marking $\psi\in\NMM^0$ 
the following conditions hold:
\begin{enumerate}
	\item\label{cond.finitness} {\em (finiteness)} $R^\e(\psi) < \infty$,
	\item \label{cond.dependence} {\em (mark dependence)} $R^\e(\psi)$ depends only on $\varphi$ and the mark $m'_0\in\M'$ of $o$.
\suspend{enumerate}
\smallskip
Moreover, a family of functions $R^\e$ satisfying the above conditions~\eqref{cond.finitness} and~\eqref{cond.dependence} is 
\resume{enumerate}	
\item {\em internally stabilizing} if $|\xi(\psi) - \xi(\psi')| \le \e$ for any two markings $\psi, \psi'$ of $\varphi$ with $\psi \cap Q_{R^\e(\psi)} = \psi' \cap Q_{R^\e(\psi)}$,
\item {\em externally stabilizing} if $\sum_{x_i \in \R^d \setminus Q_{R^\e(\psi)}} |\psi' \Delta_{\xi, x_i} \psi| \le \e$ for every two markings $\psi, \psi'$ of $\varphi$ with $\psi \setminus \{o\} = \psi' \setminus \{o\}$,
\suspend{enumerate}
both properties holding for $\P^0$-almost every realization $\varphi$ of $\Phi$.
 
\smallskip\indent
For some examples, such as minimal matching, imposing external stabilization would be too strong. As an alternative, we define a strengthened variant of the internal stabilization. To this regard, a function $R:\, \NMM^0 \to [0, \infty)$, 
 satisfying conditions~\eqref{cond.finitness} and~\eqref{cond.dependence} above with 
 $R=R^\e$
is 
\resume{enumerate}
\item \emph{strongly stabilizing} if $\xi(\psi) = \xi(\psi')$ for any two markings $\psi, \psi'$ of $\varphi$ such that $o \not \in \bigcup_{x \in \psi' \setminus \psi} (x + Q_{R(\shift_x\psi) \vee R(\shift_x\psi')})$, for $\P^0$-almost every realization $\varphi$ of $\Phi$.
\end{enumerate}

By the covariance principle, we extend the definition of the stabilizing radii to all points $(x_i,m_i,m_i')\in\psi$ of $\psi\in\NMM$: $R^\e_i(\psi)=R^\e(\shift_{x_i}(\psi))$ and similarly for $R_i(\psi)$.
For a stationary $\M'$-marking 
$\Psi = \{(X_i, M_i, M_i')\}_{i}$ 
of $\Phi$ we denote 
$$R_i^\e := R^\e(\shift_{X_i}(\Psi))$$
and similarly for~$R_i$.
Note $R^\e=R^\e_0$ and $R=R_0$
are the respective stabilizing radii of the point located at the origin $X_0 = o$ under the Palm probability $\P^0$ of $\Psi$.

The following result, to be proved in Section~\ref{ss.int-to-local}, allows one to deduce the intensity-optimality from the local optimality.
\begin{theorem}[From intensity- to locally optimal]
 \label{equiv1Thm}	For $\mc L(\Phi)$ and $M'$, assume $\xi$
admits  either a strongly stabilizing radius, or a family of internally and externally stabilizing radii. If
$-\infty<\g_{\xi, \ms{opt}}< \infty$, then every intensity-optimal marking is locally optimal.
\end{theorem}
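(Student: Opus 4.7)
The plan is a contradiction argument. Assume $\Psi\in\T(\Phi)$ is intensity-optimal but not locally optimal; I will construct a stationary marking $\Psi^{*}\in\T(\Phi)$ with $\gh(\Psi^{*})>\gh(\Psi)=\g_{\xi,\ms{opt}}$.

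\emph{Step 1: a quantitative, covariant local improvement rule.} Failure of local optimality means that on an event of positive probability there is a modification of $\Psi$ differing from $\Psi$ at only finitely many atoms of $\Phi$, with bounded positive part and strictly positive aggregated score gain. By a standard measurable selection, and by discarding a null portion of this event in order to uniformly control the gain magnitude and the modification radius, I fix constants $\delta,r>0$ and a measurable map $\Psi\mapsto \Psi'$ that only modifies marks of atoms of $\Phi$ lying in $Q_r(o)$, equals $\Psi$ off an event $A$ with $\P(A)>0$, and satisfies $\Psi'\Delta_{\xi}\Psi\ge\delta$ on $A$. Translating covariantly yields, for every $x\in\R^d$, the shifted event $A_x$ that the rule applied around $x$ produces a modification supported inside $x+Q_r(o)$ with aggregated gain at least $\delta$; stationarity gives $\P(A_x)=\P(A)$.

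\emph{Step 2: sparse stationary activation.} The random set $\mc N:=\{x\in\R^d:A_x\text{ occurs}\}$ is a stationary subset of $\R^d$ with positive Lebesgue density. Working on a suitable enlargement of the probability space carrying independent auxiliary randomness, I thin $\mc N$ to a stationary point process $\mc N^{*}\subset\mc N$ of positive intensity $\rho^{*}>0$ whose centres are pairwise well-separated in a sense ensuring non-interference. In the strongly-stabilising case, the exclusion forbids any activated centre from lying in another activated centre's modification influence set $\bigcup_{y\in\Psi'_x\setminus\Psi}(y+Q_{R\vee R'})$. In the internal/external case, I prefix $\e\in(0,\delta/2)$ and impose pairwise separation exceeding $2r+2R^{\e}_{\max}$, where $R^{\e}_{\max}$ is a deterministic truncation of the stabilising radii chosen so that the density of centres whose true radius exceeds $R^{\e}_{\max}$ is negligible. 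Let $\Psi^{*}$ be the stationary marking obtained from $\Psi$ by simultaneously applying the rule around every $x\in\mc N^{*}$.

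\emph{Step 3: bookkeeping and conclusion.} By stationarity,
\begin{equation*}
\gh(\Psi^{*})-\gh(\Psi)=\E\Bigl[\sum_{X_i\in\Psi\cap[0,1]^d}\bigl(\xi(\shift_{X_i}\Psi^{*})-\xi(\shift_{X_i}\Psi)\bigr)\Bigr],
\end{equation*}
which I decompose by the nearest activated centre to $X_i$. For each $x\in\mc N^{*}$, internal (resp.\ strong) stabilisation certifies that scores of atoms in the stabilisation box around $x$ are unaffected by distant activations, so the intrinsic local gain of at least $\delta$ is realised up to an error of $O(\e)$ (resp.\ $0$); external (resp.\ strong) stabilisation bounds the cumulative residual influence of each individual activation on far-away scores by $\e$ (resp.\ $0$). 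Summing, the net intensity increase is at least $\rho^{*}(\delta-C\e)$, which is strictly positive for $\e$ small enough, contradicting intensity-optimality.

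The genuine obstacle is Step 2--3: absent stabilisation, a single mark change can ripple to infinity, and stationarising a local improvement rule via any naive construction produces an ill-defined or uncontrolled aggregate perturbation. The stabilisation hypotheses are precisely what certifies that the intrinsic per-activation gain of $\delta$ survives stationarisation up to an arbitrarily small loss controlled by $\e$, turning local improvability into a strict increase of the score-intensity.
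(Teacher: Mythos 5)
Your high-level plan — contradiction, stationarize a quantitative local improvement rule, thin it to a well-separated (Mat\'ern-type) stationary configuration of swaps, apply them all, and argue that the aggregate gain per unit volume is $\rho^*(\delta - C\e) > 0$ — is exactly the paper's strategy. Steps~1 and~2 essentially reproduce the paper's notion of $(\delta, \e, b, b', r)$-valid swaps and Lemma~4.1/4.2, though the paper centers swaps at a point of $\Phi$ (the lexicographic minimum of $\psi \setminus \Psi$) rather than at an arbitrary $x \in \R^d$, and it keeps the stabilization radii random while bounding related diameters, rather than truncating deterministically; these are cosmetic.

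The genuine gap is in Step~3, and you essentially flag it yourself as the real obstacle but do not resolve it. Once you have the stationary swap process $\mc S$ and the modified marking $\Psi^*$, the quantity to control is $\E^0[\xi(\Psi^*) - \xi(\Psi)]$, which, at the typical point $o$, aggregates the effect of \emph{infinitely many} swaps. Your per-activation bounds (gain $\ge \delta$ locally, residual effect $\le \e$ at far-away points) are estimates for \emph{one swap acting on all points}. Passing from one viewpoint to the other is not a ``summing'' step: it requires the mass transport formula between $\Phi$ and $\mc S$ to rewrite $\frac{\lambda}{\lambda_{\mc S}}\E^0[\xi(\Psi^*) - \xi(\Psi)]$ as $\E^0_{\mc S}$ of the total effect of the typical swap on all points. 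The paper makes this precise by (i) fixing a covariant enumeration of the points of $\Phi$ and hence of the swaps, (ii) introducing the intermediate markings $\overline\Psi_j$ obtained by applying swaps $\Psi_k$ for $k \le j$, and (iii) decomposing $\xi(\shift_{X_i}\Psi^*) - \xi(\shift_{X_i}\Psi)$ into the three kinds of increments $A(i,j)$, $B(i,j)$, $C(i,j)$, each of which has a clean per-swap bound from condition~\eqref{swap3}, internal stabilization, or external stabilization, respectively. The telescoping identity $\sum_j C(i,j) = \xi(\shift_{X_i}\Psi^*) - \xi(\shift_{X_i}\Psi)$ also needs justification (convergence of $\xi(\shift_{X_i}\overline\Psi_j)$ as $j \to \pm\infty$ via internal stabilization), which your write-up leaves implicit.

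Under strong stabilization your sketch is closer to complete, because there every point's score change is attributable to at most one swap (points outside all second-iterate influence domains see \emph{no} change, not merely a small one), and then the Mat\'ern compatibility does all the work; the paper even phrases the aggregation step as ``taking the expectation (more formally, using mass transport formula\ldots)''. Under internal/external stabilization, however, far-away points do change their scores by a nonzero amount for \emph{every} swap, so the cumulative effect at a fixed point is an infinite sum of small terms, and the only way to control it is to dualize via mass transport and use external stabilization as a bound on the \emph{row sums} (one swap, all points) rather than the \emph{column sums} (all swaps, one point). Your ``decompose by the nearest activated centre to $X_i$'' hints at this but does not account for the infinitely many non-nearest centres, each of which contributes. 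Making this precise is exactly what separates the sketch from a proof.
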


	%
	%
To deduce intensity-optimality from the local optimality, we propose the following two alternative sets of sufficient conditions.
	%
	%
A distinguished mark $m_o' \in \M'$ is a \emph{neutral mark} if it does not exert adverse effects on the scores of other points; more precisely, such that if $\psi = \{(x_i, m_i, m_i')\}_{i}$ is an $\M'$-marking of $\varphi= \{(x_i, m_i)\}_{i}$ and $\psi^*$ is the $\M'$-marking obtained from $\psi$ by replacing some $m_i'$ by $m_o'$, then
	\begin{align}
		\label{neutralEq}
		\xi(\shift_{x_i}\psi^*) \ge 0 \quad \text{ and } \quad \psi^* \Delta_{\xi, x_j}\psi \ge 0 \text{ for every $j \ne i$}.
	\end{align}

	%
	%
	Some score functions do not admit any natural neutral mark or integrable majorant. As an alternative, in the spirit of~\cite{energy}, a measurable function $\ol \xi:\, \NM^0 \to [0, \infty)$,
	is called {\em score difference majorant} if 
  for $\P$ almost any realization $\varphi$ of the basis process the following inequality holds
\begin{equation}\label{e.finite-energy}|\psi \Delta_{\xi, x_i} ((\psi \cap B) \cup (\psi' \setminus B))| \le \ol\xi(\shift_{x_i}\varphi)
\end{equation}
	for any two admissible markings $\psi$, $\psi'$ of $\varphi$, any Borel set $B \subset \R^d$ and any $x_i \in \varphi \cap B$. (A similar property is called {\em finite-energy property} in~\cite{energy}.) In words, the score difference majorant $\ol\xi$ controls how much changing the marks away from a given point can influence the score at that point. In particular, by~\eqref{e.finite-energy} $((\psi \cap B) \cup (\psi' \setminus B)$ is necessarily an admissible marking.

In both scenarios, we need stronger moment assumptions on the integrable $R^\e(\psi)$ and $\ol \xi$.
The following result will be proved in Section~\ref{ss.loc-to-int}. 
\begin{theorem}[From locally to intensity-optimal]
		\label{equiv2Thm}
	Assume $\mc L(\Phi)$, $M'$ and $\xi$  a family of internally and externally stabilizing radii $R^\e$.
Assume
		\begin{enumerate}
			\item $\xi$ takes values in $\{-\infty\}\cup[0,\infty)$ and admits a neutral mark and an integrable majorant $\ol\xi$, or 
			\item $\xi$ takes values in $[-\infty,0]$ and admits some score difference majorant $\ol\xi$,
		\end{enumerate}
		such that $\E^0[\ol\xi^p]<\infty$ and 
$\E^0[(R_0^\e)^{dp/(p-1)}] < \infty$ 	for some $p>1$, every $\e > 0$ and every admissible marking $\Psi \in \Ta(\Phi)$. Then, every locally optimal marking is intensity-optimal.
\end{theorem}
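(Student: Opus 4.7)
The plan is to show that an arbitrary locally optimal marking $\Psi$ dominates, in $\xi$-intensity, every admissible stationary marking $\Psi^*$. First I would place $\Psi$ and $\Psi^*$ on a common probability space sharing a base $\Phi$, by taking the independent product of their conditional distributions given $\Phi$; this preserves joint stationarity and the individual marginals. Then, for scales $L\gg r\gg 1$, I would build a finite perturbation $\Psi^{L,r}$ of $\Psi$ that copies the $\M'$-marks of $\Psi^*$ on $\Phi\cap Q_L$ and those of $\Psi$ on $\Phi\setminus Q_{L+r}$. In case (1), the buffer $\Phi\cap(Q_{L+r}\setminus Q_L)$ carries the neutral mark $m_o'$, and admissibility of $\Psi^{L,r}$ follows from the neutral-mark property together with internal stabilization---on the event that the $R^\e$ of each point outside $Q_{L+r}$ is smaller than its distance to the modified region, whose complement has small $\P^0$-probability by the integrability of $R^\e$. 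In case (2) the score-difference-majorant hypothesis makes every hybrid of admissible markings automatically admissible, so no buffer is needed.

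With the shorthand $\delta_i := \Psi^{L,r}\Delta_{\xi,X_i}\Psi$, the next step is to analyse the expectation of
\begin{equation*}
\Psi^{L,r}\Delta_\xi\Psi \;=\; \sum_{X_i\in Q_L}\delta_i \;+\; \sum_{X_i\in Q_{L+r}\setminus Q_L}\delta_i \;+\; \sum_{X_i\notin Q_{L+r}}\delta_i.
\end{equation*}
Summability of the positive parts, which is required to invoke local optimality, follows from $\ol\xi$, yielding $\Psi^{L,r}\Delta_\xi\Psi\le 0$ almost surely. By stationarity and a Campbell--Mecke computation, the first sum contributes $L^d(\gh(\Psi^*)-\gh(\Psi))$ up to a boundary term of order $L^{d-1}r$; the buffer sum, having $O(L^{d-1}r)$ points each controlled in expectation by $\ol\xi$, is of the same order.

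The decisive and most delicate step is the outside sum. I would interpolate between $\Psi$ and $\Psi^{L,r}$ by perturbing the marks at the points of $\Phi\cap Q_{L+r}$ one at a time. External stabilization applied to each single-point perturbation at $Y_j$ bounds the total score change at points outside $Y_j+Q_{R^\e_j}$ by $\e$; summing over all $Y_j\in\Phi\cap Q_{L+r}$ yields an outside contribution of order $\e L^d$. The ``bad'' remainder, coming from pairs $(Y_j,X_i)$ with $X_i\in(Y_j+Q_{R^\e_j})\cap Q_{L+r}^c$---which forces $R^\e_j>\dist(Y_j,Q_{L+r}^c)$---would be bounded via H\"older's inequality with conjugate exponents $p$ and $p'=p/(p-1)$, using $\ol\xi\in L^p(\P^0)$ and $R^\e\in L^{dp'}(\P^0)$ from the hypothesis, producing an $o(L^d)$ contribution. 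Combining the three pieces, $0\ge \E[\Psi^{L,r}\Delta_\xi\Psi]\ge L^d(\gh(\Psi^*)-\gh(\Psi))-C\e L^d - o(L^d)$; dividing by $L^d$, letting $L\to\infty$, then $r\to\infty$, and finally $\e\to 0$, yields $\gh(\Psi^*)\le\gh(\Psi)$. The main obstacle is ensuring that the collective boundary error from perturbing $\Theta(L^d)$ many marks remains genuinely of order $\e L^d$ rather than $L^d$; the balance between $L^p$ for $\ol\xi$ and $L^{dp/(p-1)}$ for $R^\e$ in the moment hypothesis is precisely what is needed to close this H\"older estimate.
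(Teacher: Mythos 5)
Your high-level strategy (place a locally optimal $\Psi$ and a competitor on a common space, perturb $\Psi$ in a finite window, invoke local optimality, decompose the aggregate score difference, then control remainders via stabilization and moments) matches the paper's, but you diverge on how the outside contribution is handled, and this is where a genuine gap appears. The paper does not interpolate one point at a time; it replaces the deterministic buffer by the adaptive sets $\Xis(Q_r)$ and $\Xis(Q_r^c)$ of points whose $\e$-stabilization box of side $\bar R^\e_i$ sits entirely in $Q_r$ resp.~$Q_r^c$. With this choice, each problematic region is a \emph{single} sum over points $X_i$ indexed solely by their own radius $\bar R^\e_i$, and a single application of the Campbell formula turns it into $\lambda\,\E^0[\ol\xi_0(r+\bar R^\e_0)^d]$, closed by a one-variable H\"older estimate that uses exactly the hypothesized moments $\E^0[\ol\xi^p]$ and $\E^0[(R_0^\e)^{dp/(p-1)}]$.

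Your one-at-a-time interpolation instead produces a ``bad remainder'' that is a sum over \emph{pairs} $(Y_j,X_i)$ with $Y_j\in Q_{L+r}$ and $X_i\in (Y_j+Q_{R^\e_j})\cap Q_{L+r}^c$. Its expectation is governed by the second factorial moment measure of $\Phi$ and a two-point Palm expectation, which the first-order Palm moments $\E^0[\ol\xi^p]$ and $\E^0[(R_0^\e)^{dp/(p-1)}]$ do not control for a general stationary base process; the moment balance you cite is tailored to a single-sum H\"older estimate, not to this double sum. A second, independent issue with the interpolation: in case (1) the hybrid configurations $\Psi_k$ between $\Psi$ and $\Psi^{L,r}$ are not guaranteed to be admissible (neutral marks only protect the step of replacing a mark by $m_o'$; replacing a $\Psi$-mark by a $\Psi^*$-mark in $Q_L$ is not a neutral move), so the bound $|\xi(\shift_{X_i}\Psi_k)|\le\ol\xi_i$, which is stated only for admissible markings, is not available for the intermediates. (In case (2), the score-difference-majorant hypothesis does make all hybrids $(\psi\cap B)\cup(\psi'\setminus B)$ admissible, as you note, so that part of the objection applies only to the neutral-mark case.) You would need to abandon the per-point interpolation for the outside region in favor of the $\Xis$-decomposition in order to stay within the theorem's moment hypotheses.
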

Before proving the results, we present several examples of problems for which our assumptions can be verified. 

\subsection{Examples}
\label{exSec}
 The general framework developed 
 above applies to a variety of problems considered separately in the literature. In this section, we briefly present some of them. Specifically, Sections~\ref{sss.HC} -- \ref{sss.LP} discuss, respectively, the problems of the volume-maximal hard-core thinning, the minimal matching, the traveling salesman, and the classical Lilypond model. Examples of problems presented in Sections~\ref{sss.CA} and~\ref{sss.MAC} are drawn from the communication network literature.

\subsubsection{Maximal-volume hard-core thinnings}
\label{sss.HC}
%
%
Let us first revisit our illustrating example of the Introduction, already studied in~\cite{thinning}. Remember, the goal is to define the hard-core thinning of a germ-grain model maximizing the fraction of the space covered by the retained sets.
%
%
In this example $\M = \mathbb K$ is the space of convex, compact bodies in $\R^d$ with $|M|$ denoting the Lebesgue measure of the body $M\in\M$. A thinning of the germ-grain model $\Phi = \{(X_i,M_i)\}$ with germs $\{X_i\}$ and centered (not necessarily independent) grains $\{M_i\}$
 can be thought of as a binary marking, i.e., $\M' = \{0, 1\}$.
The mark $M' = 1$ means retention of the grain $X_i \oplus M_i = \{X_i + m:\, m \in M_i\}$ and $M' = 0$ its deletion. A given thinning $\Psi$ is a \emph{hard-core thinning} if almost surely there is no overlapping of any two retained grains $(X_i\oplus \ms{int}(M_i)) \cap (X_j \oplus \ms{int}(M_j)) = \es$ for all $i,j$ such that $M'_i = M'_j = 1$, where we write $\ms{int}$ for the topological interior.
As the score function, we take the volume $|M_0|$ of the grain at $o$ provided it is not deleted and not overpalled with another retained grain. That is for $\Psi\in\NMM^0$
$$
\xi(\Psi) = M'_0\times
\begin{cases}
|M_0| &\text{if $(X_i\oplus \ms{int}(M_i)) \cap \ms{int}(M_0) = \es$ for all $i$ such that $M_i'=1$,}\\
	-\infty &\text{otherwise,}
\end{cases}
$$
with $0 \times -\infty =0$. Note that the $\xi$-intensity of $\Psi$ equals the volume fraction of the process of the retained grains, provided it is a hard-core process, and $-\infty$ otherwise. 
Indeed, for a hard-core thinning $\Psi$ we have 
$\P^0$-a.s. $\xi(\Psi)=|M_0|M_0'$, and by the Campbell
formula interpreted as the mass transport
formula between $\Psi$ and the Lebesgue measure, see e.g.~\cite[(10.3.1)]{blaszczyszyn2017lecture},
\begin{align*}\g_\xi(\Psi)& = \lambda \E^0[|M_0|M'_0] 
=\E\Bigl[\sum_{X_i\in\Phi}
\ind(o\in X_i\oplus M_i)M_i'\Bigr]
=\P\Bigl(\,o\in\bigcup_{X_i\in\Phi, M_i'=1}X_i\oplus M_i\,\Bigr),
\end{align*}
where the last equality follows from the hard-core thinning assumption.
Obviously $\g_{\xi, \ms{opt}}\ge 0$.

%
%
 Note first that by taking
 the topological interiors of the grains for the non-overlapping constraint, we ensure that { both the set of admissible markings is closed and
 $\xi$ is upper semicontinuous.}  Moreover, we can choose $\xi_\epsilon(\Psi):= |M_0| M_0'$ for all $\epsilon>0$ as continuous approximations. The function  $\overline \xi(\Phi) = |M_0|$ is an integrable majorant 
 of $\xi_\epsilon(\Psi)$  provided $\E^0[|M_0|] < \infty$. Since $\M'$ is compact, the family $\T(\Phi)$ is $\M'$-tight. 
 Thus, 
 if $\E^0[|M_0|] < \infty$ then 
 by Theorem~\ref{existThm} stationary, volume-maximal, hard-core thinnings exist. Note, this result is non-trivial only if $\g_{\xi, \ms{opt}}>0$.

The following random variable 
\begin{equation}\label{e.R0-hard-core}
R_0 = \inf\{r > 0: (X_i \oplus \ms{int}(M_i)) \cap \ms{int}(M_0) = \es \text{ for every $X_i \not \in Q_r$}\}
\end{equation}
is a strongly 
stabilizing radius provided $\P^0(R_0<\infty) = 1$. 
Under this assumption, by Theorem~\ref{equiv1Thm},
any stationary, volume-maximal, hard-core thinning is locally optimal. Note, this implies also that $\g_{\xi, \ms{opt}}>0$.

In order to establish conditions under which any locally optimal, stationary hard-core thinning is intensity optimal, note that $\xi$ admits $m'_o = 0$
as the neutral mark. Some moment of order $p>1$ of the majorant $\overline \xi(\Phi) = |M_0|$ has to be assumed.

Note $R_0^\e:=R_0$ with $R_0$ defined in~\eqref{e.R0-hard-core} 
is a family of internally and externally stabilizing radii (provided $R_0$ is finite) but this time we need finiteness of its higher order moment $dp/(p-1)$. In this regard, assume that $\Phi$ is a Poisson-Boolean model, i.e., an independently marked Poisson process of germs. Then, writing 
$M_0, M_1$ for two independent copies of the typical (generic) grain and 
$\vM = \{-x: x\in M\}$, 
the Mecke-Slivnyak formula \cite[Theorem 1.4.5]{baccelli2009stochastic2} yields that
\begin{align*}
	\P^0(R_0 > r) & = \P^0\big((X_i \oplus \ms{int}(M_i)) \cap \ms{int}(M_0) \ne \es \text{ for some $X_i \not \in Q_r$}\big)\\
	&\le \E^0\big[\#\{X_i \in \Phi \setminus Q_r:\, o \in (X_i \oplus \ms{int}(M_i)) \oplus \ms{int}(\vM_0) \}\big]\\
	& = \la \Big[\int_{\R^d \setminus Q_r}\P(x \in \ms{int}(M_1) \oplus \ms{int}(\vM_0))\d x \}\Big]\\
	& = \la \E\big[|(M_1 \oplus \vM_0)\setminus Q_r|\big].
\end{align*}
This provides a concrete starting point for further investigations, for instance when the tail behavior of the diameter of the generic grain is available. 

\subsubsection{Minimal matchings}
\label{sss.mm}
This example addresses some question of minimal matchings of point processes studied in \cite{holroyd, match}.
The points of the base process $\Phi$ are marked by the elements of the space $\M = \{\ms b, \ms r\}$ consisting of two different elements. This given marking 
splits the points of the base process into two processes of $\ms b$-``blue'' and $\ms r$-``red'' points. Assuming that the two processes have equal intensities,
one aims at matching blue and red points together while minimizing the distance between the matched partners. 

In order to cast this problem into our optimal marking framework, 
we take $\M' = \R^d$ and for a (red or blue) point $X_i$ in $\Phi$, we interpret its mark $M'_i\in\M'$ as the relative position of its (blue or red, respectively) matching partner in~$\Phi$. 
We consider the following score function, with $|\cdot|$ denoting the Euclidean distance on $\R^d$, and with the value $-\infty$ 
corresponding to a non-admissible marking (not corresponding to a valid matching)
$$
\xi(\Psi) = 
\begin{cases}
	-|M_0'| &\text{if $X_0+M_0'=M_0'\in\Phi$ is a valid matching partner of $X_0=o$,}\\
-\infty &\text{otherwise},
\end{cases}
$$
where the matching validity 
means that $M_0'\in\Phi$,
$M_0'$ is matched to $X_0=o$
(in symbols: $M_0'(\shift_{M'_0}\Psi)=-M_0'$),
and the points $X_0$ and $M_0'$ have opposite colors (marks in $\M$). 

{ Note that 
both the set of admissible markings is closed and
 $\xi$ is upper semicontinuous.} 
 Moreover, we may choose $\xi_\e(\Psi) = -\min\{\e^{-1}, |M_0'|\} $ as continuous approximations { trivially admitting $\overline\xi_\e\equiv \e^{-1}$ as integrable majorants.}
 Note also, that $\xi$ satisfies tightness condition {\bf (T)} in Theorem~\ref{existThm}.
Indeed, for any $\gamma<0$ and $\e>0$ taking $r=-\gamma/(\e\lambda)$ one has
$\P^0_\Psi(|M_0'| > r) \le \e$
for any valid matching $\Psi$ such that 
$\g_\xi(\Psi)=-\lambda\E^0_\Psi[|M'_0|] \ge \gamma$.
 Additionally, $R(\psi) = |M_0'|_\infty$ 
is a strongly stabilizing radius.

Hence, Theorems \ref{existThm} and \ref{equiv1Thm} imply that an intensity-minimal matching exists and any such intensity-minimal matching is also locally minimal provided that $\g_{\xi, \ms{opt}} \ne -\infty$. At this point, we can draw a connection to Question 3 on the existence of locally minimal stationary matchings raised in \cite{holroyd} for the Poisson case. In \cite[Theorems 1, 2]{match}, it is shown that for equal intensities of blue and red points, in dimension $d \ge 3$ the optimal intensity $\g_{\xi, \ms{opt}}$ is indeed finite, so that all intensity-minimal matchings are indeed locally minimal. On the other hand, in dimension 1 and 2 we have $\g_{\xi, \ms{opt}} = -\infty$ and, hence,  all matchings are (trivially) intensity optimal, whereas it is not at all obvious whether there exist locally optimal ones.

We also note that $\xi$ does not exhibit external stabilization. Indeed, when the origin is already matched, then changing the mark of any other point to match with the origin always leads to an non-admissible configuration. 

\subsubsection{Traveling salesman}
\label{sss.ts}
On a finite set of points, the traveling salesman problem asks for a tour of minimal length visiting each of the points at least once. Although this definition does not make sense in a stationary setting, a reasonable extension could consist in considering 2-regular graphs without finite cycles. That means, $\M = \{*\}$ (irrelevant) and $\M' = \R^d \times \R^d$. Hence, any stationary marking can be represented as $\Psi = \{(X_i, \Mi_i, \Mo_i)\}_{i \ge 1}$. Now, we say that $(X_{i_1}, \Mi_{i_1}, \Mo_{i_1}), \dots, (X_{i_k}, \Mi_{i_k}, \Mo_{i_k})$ form a \emph{finite cycle} if for some $k\ge2$, $X_{i_j}+\Mi_{i_j} = X_{i_{j - 1}}$ and $X_{i_j}+\Mo_{i_j} = X_{i_{j + 1}}$ for all $j \in \{1, \dots, k\}$ and distinct $X_{i_j}$,
where the sub-indices are understood to be taken cyclically mod $k$. Note, in particular this means that 
$\Mo_{i_j}=-\Mi_{i_{j+1}}$.

We consider the following upper semicontinuous score function with the value $-\infty$ corresponding to a non-admissible marking (not corresponding to a valid solution of the salesmen problem)
$$
\xi(\Psi) =
\begin{cases}
 - \frac\lambda2(|\Mi_0| + |\Mo_0|) &\text{if $(X_0=o, \Mi_0, \Mo_0)$ belongs to a finite cycle,}\\
-\infty &\text{otherwise}
\end{cases}
$$
With the above definition, the $\xi$-intensity corresponds to the mean path length traversed by the salesmen per unit of volume.

Arguing as in Section \ref{sss.mm}, we see that the conditions of Theorem \ref{existThm} are satisfied, so that we conclude the existence of an intensity-optimal marking. However, in this setting, it is not clear how to define a radius of stabilization. Indeed, changing the marks of distant points could still give rise to a cycle containing the origin, thereby leading to a non-admissible configuration.

\subsubsection{Lilypond model}
\label{sss.LP}
Consider the growth model, where at time 0, every point of a stationary point process starts expanding a spherical grain around it with unit speed in all directions and ceases its growth at the instant when it collides with another grain. This is the intuitive description of the lilypond model from \cite{llp0}.
In particular, it is a hard-core, germ-grain model with spherical grains. Additionally, the variable radii are subject to some maximization under hard-core constraints.
Formally, let $\Phi = \{X_i\}_{i \ge 1}$ be the base process
on $\R^d$, so that the mark space { $\M=\{*\}$ is irrelevant.} The space of optimization marks $\M' = [0, \infty)$
corresponds to the possible radii of the spherical grains.
Denote by $M'_i$ the final radius of the grain of $X_i$. Note that neither existence nor uniqueness of a marking $M'_i$ reflecting the lilypond growth dynamics is evident. In fact, the problem is shown in~\cite{heveling}
to be equivalent to the existence and uniqueness of the intensity-optimal marking with respect to the following upper semicontinuous score function
$$\xi(\Psi) = 
\begin{cases}
M_0' - \inf_{i \ge 1}U(i) &\quad\text{if $M_0'\le\inf_{i \ge 1}U(i)$ for all $i \ge 1$,} \\
-\infty&\quad\text{otherwise,} 
\end{cases}
$$
where 
$$U(i) := \max\Bigl(\frac{|X_i|}2,|X_i| - M'_i\Bigr).$$
Note that the marking $\Psi$ is hard-core --- i.e., satisfies $|X_i - X_j| \ge M_i' + M_j'$ for all $i \ne j$ --- if it is admissible in the sense of Section \ref{defSec}; see \cite[Proposition 3.2]{heveling}.
It is shown in~\cite[Theorem 4.1]{heveling} that there exists a unique intensity-optimal and locally optimal marking
achieving $\g_{\xi,\ms{opt}} = 0$. Although the explicit construction in \cite[Theorem 4.1]{heveling} does not require us to verify the conditions of Theorems \ref{existThm} and \ref{equiv1Thm} any longer, it is still instructive how the general framework would be applied in the present example. Additionally, Theorem \ref{equiv2Thm} yields a characterization of locally optimal markings that is not an immediate consequence of \cite{heveling}.
%
%

First, we use $\xi_\e(\Psi) = M_0' - \inf_{i \ge 1}U(i)$ as a continuous approximation for every $\e > 0$. 
Note that $\xi$ satisfies tightness condition {\bf (T)} in Theorem~\ref{existThm}.
Indeed, for any hard-core marking, the radius of the typical grain of the typical point is not larger than the distance to its nearest neighbor 
$$R_{0, \ms{NN}} = \inf_{i \ne 0} |X_i |.$$ 
The random variable $R_{0, \ms{NN}}$ is not only $\P^0$ almost surely finite for any base process of non-null intensity
(thus guaranteeing condition {\bf (T)}) but also can be used as an integrable majorant $$\overline\xi(\Psi) = R_{0, \ms{NN}}$$
provided $\E^0[R_{0, \ms{NN}}] < \infty$. Similarly to Example \ref{sss.HC}, it is square-integrable if 
$$\E^0\Big[R_{0, \ms{NN}}\sum_{X_i\in Q_1}R_{i, \ms{NN}}\Big] < \infty.$$
Moreover,
$$R_0 = R_0^\e = \inf\big\{r > 0: B_{R_{i, \ms{NN}}}(X_i) \cap B_{R_{0, \ms{NN}}} = \es \text{ for every $X_i \not \in Q_r$}\}\big\}$$
is a radius of stabilization for every $\e > 0$, which is almost surely finite if $\E^0[R_{0, \ms{NN}}^d] < \infty$.
In fact, $R_0$ is strongly stabilizing for $\xi$.

We will present now two examples drawn from the communication network literature.
\subsubsection{Caching}
\label{sss.CA}
A cache is a component that stores data, so that future requests can be served faster. Caching is used in communication networks 
to bring popular multimedia contents 
close to end-users. Optimal content placement,
especially in spatially distributed caches, is a problem that has recently attracted a lot of attention in the engineering literature. 
In what follows, we will address it using the optimal marking framework.

As in Section~\ref{sss.HC}, let $\M$ be the space of convex, compact bodies in
$\R^d$. The marked point process $\Phi \in \NM$ generates the
germ-grain coverage process $\bigcup_{i \ge 1} X_i\oplus M_i$, where we interpret $X_i$ as the locations of caches in the space and
$X_i\oplus M_i$ as the coverage regions of the respective caches
(from where the items stored in memory of the given cache can be
reached).

Consider $K\ge1$ interpreted as the cache memory size.
Let $\M'$ be the set of all $K$-element subsets of $\{1,2,\ldots\}$.
We interpret $\{1,2,\dots\}$ as the index set of
all possible items and $m'\in\M'$ as a given set of $K$
different items, which can be stored in the memory of a given cache.

If $M'_i\in\M'$ is a mark of some point $(X_i, M_i, M_i')\in\Psi$ of a
marked point process $\Psi\in\NMM$, then all contents with indexes in
$M'_i$ are available in the coverage region $X_i\oplus M_i$.

Consider a probability distribution $\{p_k: k\ge1\}$ on $\{1,2,\dots\}$.
 We interpret $p_k$, $k\ge1$, as the popularity of the item with index~$k$ and we define the score function
$$\xi(\Psi) = \sum_{k\ge1}p_k\int_{M_0}\frac{\ind(k\in M'_0)}{n_k(y)}\,\d y,$$
where
$n_k(y) = \sum_{i \ge 1}\ind(y \in X_i\oplus M_i)\ind(k\in M'_i)$
is the number of caches from which the item $k$ is available at $y$. 
Similarly to the hard-core thinning in Section~\ref{sss.HC}, by the Campbell
formula interpreted as the mass transport
formula between $\Psi$ and the Lebesgue measure, see e.g.~\cite[(10.3.1)]{blaszczyszyn2017lecture},
we have 
$$
 \gh(\Psi) 
 =
 \sum_{k\ge1}p_k
	\P\Big(\,o\in\bigcup_{j:k\in M'_j}(X_j\oplus M_j)\,\Big),
 $$
and thus the $\xi$-intensity becomes the probability that a random item, chosen according to the popularity probability $\{p_k\}_{k \ge 1}$, requested at the origin (or at an arbitrary location) can be found in some cache
covering this location. { Let us call it the {\em content hit probability.}}
Mathematically, it is also the $\{p_k\}$-mixture of the volume fractions of the areas where the item $k$ is available. 

%
%
{ Note that all markings in $\NMM$ are admissible hence (trivially) form a closed set. As in the hard-core examples of Section~\ref{sss.HC}, $\xi$ is upper semicontinuous} but continuous approximations are more involved since $\xi(\Psi)$ can potentially be influenced by a distant germ with a large grain { (potentially making the score function jump up, when this point and its grain disappear in the limit)}. Therefore, we put 
	$$\xi_\e(\Psi) = \sum_{k\ge1}p_k\int_{M_0}\frac{\ind(k\in M'_0)}{n^\e_k(y)}\,\d y,$$
where
$n^\e_k(y) = \sum_{i \ge 1}\ind(y\in X_i\oplus M_i)\ind(k\in M'_i) h_\e(X_i)$ for a continuous and compactly supported function $h_\e:\,\R^d \to [0, 1]$ that tends monotonically to 1 as $\e \to 0$. Again, the score function $\overline\xi(\Phi) = |M_0|$ is an integrable majorant provided $\E^0[|M_0|]<\infty$. If the probability distribution $\{p_k\}_{k\ge1}$ is compactly supported, then $\T(\Phi)$ is $\M'$-tight and by Theorem~\ref{existThm}  there exist stationary cachings maximizing the content hit probability.
Local optimality of these cachings and the converse result can be obtained under the same assumptions as in  hard-core examples of Section~\ref{sss.HC}. Indeed, 
the score function is stabilizing with the stabilization radius $R_0$ given by~\eqref{e.R0-hard-core}. Moreover, we may assume that { there exist $K$ content items of zero popularity. That is, $p_{k_i} = 0$ for some $i=1,\ldots,K$. Then, $\{k_i:i=1,\ldots,K\}$ can serve as neutral mark.}

Assuming independently marked Poisson base process $\Phi$, intensity-optimal {\em independent} markings (optimal within the class 
of independent markings) are characterized in \cite{optCache}. Some Gibbs markings in a finite window are considered in \cite{optGibbs}.

\subsubsection{Medium access control}
\label{sss.MAC}
%
%
	In a wireless communication network, it is often undesirable that all transmitters send signals simultaneously. This would introduce substantial interference, thereby leading to a loss of the global quality of service. For this purpose, in practice \emph{medium access control (MAC)} is implemented to determine which users are granted access to the network infrastructure at a given instance of time. A good policy for MAC should strike a balance between high network availability and limited interference. Then, \cite{infocom} describes a MAC maximizing the sum of logarithmic throughputs, which can be cast in the framework of optimal markings. Additionally, it is shown that this optimum can be conveniently approximated by implementations relying only on local information of the network structure.

%
%
	For ease of presentation, we provide a simplified version of the model. We set $\M = \partial B_1$ as the unit sphere on~$\R^d$, so that $\Phi = \{(X_i, Y_i)\}_{i \ge 1}$ is a stationary point process of transmitters $X_i \in \R^d$, together with an associated receiver at $X_i + Y_i$, such that
	$Y_i \in\M$. In Aloha-type MACs, a transmitter $X_i$ accesses the network independently with a probability $P_i\in (0,1]=\M'$ and these probabilities will be our optimization marks. 
	
	Hence, the stationary marked processes $\Psi$ are written as $\Psi = \{(X_i, Y_i, P_i)\}_{i \ge 1}$. 
The score function in this example should capture the probability of successfully transmitting a message from $X_i$ to $Y_i$, despite the interference generated by other transmitters. This probability sensitively depends on the underlying communication protocol and for the situation described in \cite{infocom} (assuming for simplicity null noise power), it simplifies to 
	$$P_i \prod_{j \not= i}\Big(1 - \frac{P_j}{1 + |X_j -(X_i + Y_i)|^\beta}\Big)$$
	for a model parameter $\beta > d$. Motivated by proportional fairness principles, the score function is chosen as the logarithm of the above expression. That is, 
	\begin{align}
		\label{macEq}
	\xi(\Psi) = \log(P_0) + \sum_{i \not= 0} \log\Big(1 - \frac{P_i}{1 + |X_i - Y_0|^\beta}\Big) \in [-\infty,0].
	\end{align}

	 Remarkably, invoking the Mass Transport Principle, \cite{infocom} provides an explicit construction of the optimal probability as a function of $(X_i, Y_i)$ and $\Phi \setminus \{(X_i, Y_i)\}$, as well as argues its uniqueness. We shall generalize this idea in Section~\ref{ss.uniqueness-MTP}. Nevertheless, as in Example \ref{sss.LP}, it is instructive to consider conditions of Theorems \ref{existThm}--\ref{equiv2Thm} for the present example.

	 %
	 %
{ Even if $\xi$ itself is continuous, we need to construct further continuous approximations of it admitting integrable majorants.} In that regard, we take $$\xi_\e(\Psi) := \max\{-1/\e, \log(P_0)\} + \sum_{i \not =0} \log\Big(1 - \frac{P_i}{1 + |X_i - Y_0|^\beta}\Big) h_\e(X_i),$$
	where again $h_\e:\,\R^d \to [0, 1]$ is a compactly supported continuous function tending monotonically to 1 as $\e \to 0$. Here, we put a lower bound on the term $\log(P_0)$ to be able to construct majorants $\ol\xi_\e:=-1/\e +{\ol\xi}$,
with 	
\begin{equation}\ol\xi=\ol \xi( \Phi) := - \sum_{i \not= 0}\log\Big(1 - \frac1{1 + |X_i - Y_0|^\beta}\Big). \label{e.aloha-majorant}
\end{equation}
These majorants are integrable provided $\E^0[\ol \xi(\Phi)]<\infty$
--- a condition which can be verified for Poisson network.

 Note also, the problem satisfies condition {\bf (T)} of Theorem~\ref{existThm}. Indeed, for any marking $\Psi$ we have
 $\xi(\Psi) \le \log(P_0)$
 and consequently 
 for $a\in(0,1)$, $\gamma\in(-\infty,0)$
 \begin{align*}
   \P^0\{\,P_0<a\,\}&=\P^0\{\,\log(P_0)<\log(a)\,\}\\
&\le \frac{\E^0[\log(P_0)]}{\log(a)}\\
&\le\frac{\gamma}{\log(a)}
 \end{align*}
for any marking $\Psi$ such that $\E^0[\xi(\Psi)]\ge\gamma$. This allows us to conclude Theorem \ref{existThm}.

Next, we have internal and external stabilization radii
	$$R_0^{\varepsilon, \ms i} = \inf\Big\{r > 0:\, -\sum_{i \ge 1}\log\Big(1 - \frac1{1 + |X_i - Y_0|^\beta}\Big) \one(X_i \not \in Q_r) \le \varepsilon\Big\}$$
	and
	$$R_0^{\varepsilon, \ms e} = \inf\Big\{r > 0:\, -\sum_{i \ge 1}\log\Big(1 - \frac1{1 + |Y_i|^\beta}\Big) \one(X_i \not \in Q_r) \le \varepsilon\Big\}$$
	respectively.
Thus assumptions of Theorem~\ref{equiv1Thm} are satisfied provided $\E^0[\ol \xi(\Phi)]<\infty$ (implying $\g_{\xi, \ms{opt}}> - \infty$).

Regarding conditions in Theorem \ref{equiv2Thm},
interestingly, in this example there is no natural choice of a neutral mark since $P_i$ appear with different signs in the two summands in \eqref{macEq}. {However, the model admits $\overline\xi$ given by~\eqref{e.aloha-majorant} as a score difference majorant.} Indeed, in order to verify property~\eqref{e.finite-energy} let $\psi$ and $\psi'$ be two admissible markings of a base configuration~$\varphi\in\NMM^0$
	with common $P_0>0$.
	 Then, the $\log(P_0)$
	 contributions cancel, 
	 admissibility implies that the series in~\eqref{macEq} converge,
	 and we obtain that 
	$$|\psi \Delta_{\xi, o} \psi'| \le \overline\xi(\varphi).$$
	
The moment conditions 
are also easy to verify 
under Poisson assumption.
Indeed, combining the exponential Markov inequality with a computation involving Laplace transforms, one can show that the proposed radii of stabilization exhibit exponential tails, thus have finite moments of all orders.

\subsubsection{Unimodular Minkowski dimension}
\label{sss.UMD}
A large part of the terminology that we develop here for stationary markings carries over to the setting of unimodular discrete spaces as defined in \cite{unimDisc} and motivated from unimodular random networks \cite{unimNet}. Loosely speaking, unimodular discrete spaces are random metric spaces, where each point is isolated and such that the notion of a typical node can be defined in such a way that the mass transport principle holds. Similarly to stationary thinnings of point processes, \cite{unimDisc} introduces the concept of equivariant coverings of unimodular discrete spaces in order to extend the notion of the Minkowski dimension. Hence, the optimal coverings appearing in \cite[Definition 3.9]{unimDisc} are very close in spirit to the intensity-optimal markings in the present setting.

\subsection{Uniqueness}
\label{s.uniqueness}
We have seen in Theorems \ref{existThm} -- \ref{equiv2Thm} that intensity-optimal and locally optimal markings exist under rather general conditions. However, the question of distributional uniqueness is wide open. We discuss below different arguments that can be used in this regard, depending on the specific model. 

\subsubsection{Absence of percolation}
In the context of maximal hard-core thinnings (Section~\ref{sss.HC}),  the caching problem (Section~\ref{sss.CA}), and similar ones,
the absence of percolation can be used to prove 
the distributional uniqueness of the optimal marking.
Roughly speaking, in this case, any (globally) optimal marking needs to be also locally optimal within each finite cluster, and thus it is unique provided uniqueness of the local optimal choice. Using this approach, in~~\cite{thinning}, uniqueness of the maximal hard-core thinning has been proved in regimes of sub-critical and barely super-critical percolation
assuming absolute continuity of the factorial moment measures of the germ point process marked by the volumes of the grains.
\subsubsection{Implicit characterizations}
\label{ss.uniqueness-LP}
Consider the setting, where the desired markings are solutions to an equation of the form $H(\Psi) = 0$ for some real measurable function $H$ defined on $\NMM^0$. We can cast this into the setting of optimal markings via the score function
\begin{align*}
\xi(\Psi) = 
\begin{cases}
	H(\Psi) & \text{if $H(\Psi) \le 0$,}\\
-\infty & \text{otherwise.}
\end{cases}
\end{align*}
In particular, the lilypond model of Section~\ref{sss.LP} is of this form. 
 If $\Psi \in \T(\Phi)$ is such that $H(\Psi) = 0$ almost surely under $\P^0$, then $\g_{\xi, \opt} = 0$ and $\Psi$ is both intensity-optimal. Conversely, if $\g_{\xi, \opt} = 0$, then an intensity-optimal marking $\Psi$ also satisfies $H(\Psi) = 0$ almost surely. Consequently,  distributional 
 uniqueness of the solution to $H(\Psi) = 0$ is equivalent to the distributional uniqueness of the intensity-optimal markings. This is the case for the lilypond model on any point process; cf~\cite{heveling}.

\subsubsection{Mass transport principle (unimodularity)}
\label{ss.uniqueness-MTP}
Inspired by the MAC example of Section~\ref{sss.MAC}
consider a score function
in the form
\begin{equation}\label{e.linearxi}
\xi(\Psi) = 
	\sum_{i}
H(X_i, M'_i, \shift_{X_i}\Phi)\,,
\end{equation}
for some real, measurable function $H$ 
defined on $\R^d\times\M'\times\NM^0$
with values either in $[0, \infty) \cup
 \{-\infty\}$ or in $[-\infty, 0]$. Note that the series in~\eqref{e.linearxi} is well defined, with its value taken to be $-\infty$ if at least one term takes this value. 
In this case, finding an intensity-optimal marking reduces to the maximization
of some real valued function over $\M'$.

Indeed, define a function~$f$ on $\M'\times\NM^0$ 
\begin{equation}\label{e.maxM'-MTP}
f(m',\varphi) = \sum_{x\in\varphi} H(-x,m',\varphi).
\end{equation}
\begin{proposition}[Representation of the optimal intensity]
	\label{prop.opt-MTP}
	Assume that $f$ is continuous in $m'$ for $\P^0$-almost all realizations $\varphi$ of the base process $\Phi$. Then,
$$\g_{\xi,\ms{opt}} = \E^0[\sup_{m'\in\M'} f(m',\Phi)]\,.$$
\end{proposition}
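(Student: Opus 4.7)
The plan centers on the mass transport principle, which turns the optimization over stationary $\M'$-markings into a pointwise optimization of the $\M'$-mark carried by the typical point. The first step is to rewrite $\gh(\Psi)$ in a form that isolates $M'_0$. Consider the kernel $\phi(x,y,\psi):=H(y-x,\,m'_y(\psi),\,\shift_y\pi(\psi))$, where $m'_y(\psi)$ denotes the $\M'$-mark of $y$ in $\psi$; this kernel is diagonally shift-covariant. The mass transport identity at the Palm level,
\begin{equation*}
\E^0\Bigl[\sum_{y\in\Phi}\phi(o,y,\Psi)\Bigr]=\E^0\Bigl[\sum_{y\in\Phi}\phi(y,o,\Psi)\Bigr],
\end{equation*}
has left-hand side equal to $\E^0[\xi(\Psi)]$ by the definition~\eqref{e.linearxi} of $\xi$. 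Unwinding the right-hand side --- using $\shift_o\Phi=\Phi$ and that the $\M'$-mark at the origin under $\P^0$ is $M'_0$ --- gives $\E^0[f(M'_0,\Phi)]$. After multiplying by $\lambda$, every $\Psi\in\T(\Phi)$ satisfies $\gh(\Psi)=\lambda\E^0[f(M'_0,\Phi)]$, and the pointwise bound $f(M'_0,\Phi)\le\sup_{m'\in\M'}f(m',\Phi)$ yields the upper bound on $\g_{\xi,\opt}$.

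For the matching lower bound I would use a measurable near-optimal selection. Fix a countable dense subset $\{m'_n\}_n\subset\M'$; by continuity of $f(\cdot,\varphi)$, one has $\sup_{m'}f(m',\varphi)=\sup_n f(m'_n,\varphi)$, which is measurable in $\varphi$, and for each $\e>0$ the selector $n^\star(\varphi):=\min\{n:\,f(m'_n,\varphi)>\sup_{m'}f(m',\varphi)-\e\}$ is measurable. Setting $\widehat M'_\e(\varphi):=m'_{n^\star(\varphi)}$ and extending by translation covariance --- assigning to each point $X_i$ of $\Phi$ the $\M'$-mark $\widehat M'_\e(\shift_{X_i}\Phi)$ --- produces a stationary marking $\Psi_\e\in\T(\Phi)$. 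Substituting $\Psi_\e$ back into the identity of the previous step gives $\gh(\Psi_\e)\ge\lambda\E^0[\sup_{m'}f(m',\Phi)]-\lambda\e$, and sending $\e\to 0$ closes the lower bound.

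Two subtleties I anticipate are the following. First, the bookkeeping inside the mass transport step must be done carefully because $\phi$ depends on the $\M'$-marks; one has to verify that $\phi$ is genuinely diagonally shift-covariant on $(\MM)$-marked configurations. Second, because the series~\eqref{e.linearxi} is declared to be $-\infty$ whenever any single term is, $f(m',\varphi)$ can take the value $-\infty$ for some $m'$; the selection must then be restricted to the (open, by continuity) set $\{m':\,f(m',\varphi)>-\infty\}$, and admissibility of $\Psi_\e$ follows from $\sup_{m'}f(m',\Phi)>-\infty$ $\P^0$-a.s., which is essentially the condition $\g_{\xi,\opt}>-\infty$.
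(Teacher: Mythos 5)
Your proposal is correct and follows essentially the same route as the paper: rewrite $\gh(\Psi)=\lambda\E^0[f(M'_0,\Phi)]$ via the mass transport principle, bound above pointwise by $\sup_{m'}f(m',\Phi)$, and close the gap with a translation-covariant, $\e$-near-optimal measurable selector. You spell out the construction of that selector (countable dense set plus continuity) more explicitly than the paper, which simply posits a measurable $m'_\e$, but this is a refinement of the same argument rather than a different one.
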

\begin{proof}
	The result follows from the Mass Transport Principle (Unimodularity) of stationary point processes allowing one to express the $\xi$-intensity of $\Psi$
in terms of the base process~$\Phi$ and only the typical mark $M'_0$,    see e.g. \cite[Theorem 10.2.7, Remark 10.2.9]{blaszczyszyn2017lecture}
 and \cite[Remark 3.8]{mtp2}. Here,
 $$
	\gamma_\xi(\Psi) = \la\E^0\Bigl[\sum_{i} H(X_i,M'_i,\theta_{X_i}\Phi)\Bigr] = \la\E^0\Bigl[\sum_{i} H(-X_i,M'_0,\Phi)\Bigr] = \la\E^0[f(M'_0,\Phi)]\,.
	$$
Clearly,
$$\sup_{\mc L(\Psi) \in \T(\Phi)}\gh(\Psi) = \sup_{\mc L(\Psi) \in \T(\Phi)} \la \E^0[f(M'_0,\Phi)] \le \la\E^0[\sup_{m' \in \M'} f(m',\Phi)]\,.$$
On the other hand, for a given $\e>0$, consider a measurable function
	$m'_\e:\NM^0\longrightarrow \M'$ such that
$f(m'_\e(\varphi),\varphi)\ge \sup_{m' \in \M'}f(m',\varphi) - \e$. For the marking $\Psi_\e = \{(X_i, M_i, m'_\e(\theta_{X_i}\Phi))\}_{i}$
we have
$$\sup_{\mc L(\Psi) \in \T(\Phi)}\gh(\Psi)\ge
\gh(\Psi_\e)
 =
\la\E^0[f(m'_\e(\Phi),\Phi)]\ge
\la\E^0[\sup_{m' \in \M'} f(m',\Phi)] - \la\e\,,$$
which concludes the proof.
\end{proof}

To simplify measurability questions, we assume that $\M' = [0, \infty)$, but stress that the basic method carries over seamlessly to more general spaces.
The existence and (distributional) uniqueness of the intensity-optimal marking depend on the same properties of $\sup_{m' \ge 0} f(m',\Phi)$.
\begin{proposition}[Existence criterion]
	\label{prop.ex-MTP}
	Assume that $f$ is  continuous in $m'$ for $\P^0$-almost all realizations $\varphi$ of the base process $\Phi$. 
There exists an intensity-optimal marking if and only if $$\arg\max_{m' \ge 0}f(m',\varphi)\not = \es$$ for $\P^0$-almost all realizations $\varphi$ of the base process $\Phi$.
\end{proposition}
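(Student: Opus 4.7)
The plan is to piggy-back on the representation $\gamma_\xi(\Psi) = \lambda\E^0[f(M_0',\Phi)]$ and on the closed-form expression $\gamma_{\xi,\ms{opt}} = \lambda\E^0[\sup_{m'\ge 0}f(m',\Phi)]$ that were already obtained in (the proof of) Proposition~\ref{prop.opt-MTP}. Once these are in hand, both directions of the equivalence reduce to a pointwise comparison of $f(M_0',\Phi)$ with $\sup_{m'\ge 0}f(m',\Phi)$ under $\P^0$, supplemented by a measurable-selection argument in the sufficient direction.

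For the necessary condition, suppose that $\Psi^*$ is intensity-optimal with typical mark $M_0^{\prime*}$. The identity
$$\lambda\E^0[f(M_0^{\prime*},\Phi)] = \gamma_\xi(\Psi^*) = \gamma_{\xi,\ms{opt}} = \lambda\E^0\bigl[\sup_{m'\ge 0}f(m',\Phi)\bigr],$$
combined with the pointwise bound $f(M_0^{\prime*},\Phi)\le \sup_{m'\ge 0}f(m',\Phi)$, forces the two random variables to coincide $\P^0$-almost surely, so that $M_0^{\prime*}\in\arg\max_{m'\ge 0}f(m',\Phi)$ $\P^0$-a.s.; in particular this arg-max is $\P^0$-a.s.\ non-empty. (In the degenerate case where the common expectation is $\pm\infty$, one first argues that $\sup_{m'\ge 0}f(m',\Phi)$ is $\P^0$-a.s.\ equal to this value, after which the same conclusion holds trivially.)

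For the sufficient condition, assume that $A(\varphi):=\arg\max_{m'\ge 0}f(m',\varphi)$ is $\P^0$-a.s.\ non-empty. Since $f(\cdot,\varphi)$ is continuous on $[0,\infty)$, each $A(\varphi)$ is closed and therefore contains its infimum; we set $m'_*(\varphi):=\inf A(\varphi)$, extended arbitrarily on the null set where $A(\varphi)=\emptyset$. The Carath\'eodory property of $f$ (continuous in $m'$, measurable in $\varphi$) yields joint measurability of $f$, from which measurability of $m'_*$ is routine; alternatively the Kuratowski--Ryll-Nardzewski theorem applied to the closed-valued multifunction $\varphi\mapsto A(\varphi)$ furnishes a measurable selector directly. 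The stationary marking $\Psi_*:=\{(X_i,M_i,m'_*(\shift_{X_i}\Phi))\}_i$ then satisfies
$$\gamma_\xi(\Psi_*) = \lambda\E^0[f(m'_*(\Phi),\Phi)] = \lambda\E^0\bigl[\sup_{m'\ge 0}f(m',\Phi)\bigr] = \gamma_{\xi,\ms{opt}},$$
so it is intensity-optimal. The only delicate step is securing the measurable selector $m'_*$; once this is in place the rest is an immediate application of Proposition~\ref{prop.opt-MTP}, and the restriction $\M'=[0,\infty)$ mentioned just before the statement is precisely what keeps this selection elementary.
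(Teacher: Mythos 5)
Your proof takes essentially the same route as the paper's: both directions reduce to the representation $\gamma_\xi(\Psi)=\lambda\E^0[f(M_0',\Phi)]$ and the identity $\gamma_{\xi,\ms{opt}}=\lambda\E^0[\sup_{m'\ge 0}f(m',\Phi)]$ from Proposition~\ref{prop.opt-MTP}, with the paper also defining the selector $\mc M'(\varphi)$ as the smallest element of the argmax for sufficiency, and using the pointwise domination $f(M_0',\Phi)\le\sup_{m'\ge 0}f(m',\Phi)$ plus equality of expectations to get $\P^0$-a.s.\ equality for necessity. Your additional remarks on measurable selection and on the degenerate $\pm\infty$ case are fine embellishments (the paper leaves these implicit), but they do not change the substance of the argument.
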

\begin{proof}
First, assume that the considered $\arg\max$ is not empty.
Let $\mc M':\,\NM^0\longrightarrow[0, \infty)$ be defined such that $\mc M'(\varphi)$ is the smallest element in $\arg\max_{m' \ge 0}f(m',\varphi)$. Then, the marking $\Psi$ with $M'_i:= \mc M'(\theta_{X_i}\Phi)$
is intensity-optimal. 
	
	On the other hand, consider
an intensity-optimal marking $\Psi$ and let $M'_0$ be
the value of the mark at the typical point under~$\P^0$. Then, by Proposition~\ref{prop.opt-MTP},
$$\E^0[f(M'_0,\Phi)] =
\sup_{\mc L(\Psi) \in \T(\Phi)}\frac1\la \gh(\Psi) =
\E^0[\sup_{m' \ge 0} f(m',\Phi)]\,,$$
and thus 
$f(M'_0,\Phi) = \sup_{m' \ge 0} f(m',\Phi)$ 
holds $\P^0_\Phi$ almost surely.
In particular,
$\arg\max_{m' \ge 0}\allowbreak f(m',\Phi) \ne \es$.
\end{proof}

\begin{proposition}[Uniqueness criterion]
	\label{prop.uniq-MTP}
	Assume that $f$ is continuous in $m'$ for $\P^0$-almost all realizations $\varphi$ of the base process $\Phi$. 
	Assume also that $-\infty<\g_{\xi,\ms{opt}}<\infty$. 
The marking
$M'_i:= \arg\max_{m' \ge 0}f(m',\shift_{X_i}\Phi)$ is the (distributionally) unique intensity-optimal marking if and only if
the function $f(m',\Phi)$ in~\eqref{e.maxM'-MTP} has a unique maximum in $m'$ for $\P^0$-almost all realizations of the base process $\Phi$.
\end{proposition}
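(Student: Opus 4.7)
The plan is to prove the two implications separately, with both directions leveraging the integral identity of Proposition~\ref{prop.opt-MTP}, namely $\g_{\xi,\ms{opt}} = \la \E^0[\sup_{m'\ge 0} f(m',\Phi)]$, together with a measurable selection argument for the ``only if'' direction.

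For the ``if'' direction, suppose that $f(m',\Phi)$ has a unique maximizer $\P^0$-a.s., and let $\Psi$ be any intensity-optimal marking with typical mark $M'_0$ under $\P^0$. From the mass transport computation in the proof of Proposition~\ref{prop.opt-MTP}, one has $\g_\xi(\Psi) = \la\E^0[f(M'_0,\Phi)]$, so intensity-optimality combined with $\g_{\xi,\ms{opt}}<\infty$ yields $\E^0[f(M'_0,\Phi)] = \E^0[\sup_{m'\ge 0} f(m',\Phi)]$. Since $f(M'_0,\Phi) \le \sup_{m'\ge 0} f(m',\Phi)$ pointwise and both expectations are finite (here the assumption $-\infty<\g_{\xi,\ms{opt}}<\infty$ is essential to subtract them), equality must hold $\P^0$-a.s., and uniqueness of the $\arg\max$ forces $M'_0 = \arg\max_{m'\ge 0} f(m',\Phi)$ $\P^0$-almost surely. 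Then $M'_0$ is a deterministic measurable functional of $\Phi$ under~$\P^0$, and by the translation-covariance of the construction together with the Palm inversion formula, the joint law of $(\Phi,\Psi)$ is entirely determined by $\mc L(\Phi)$, yielding distributional uniqueness.

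For the ``only if'' direction, I would argue by contrapositive. Suppose the maximizer of $f(\cdot,\varphi)$ is not $\P^0$-a.s.\ unique, i.e., the set $A:=\{\varphi\in\NM^0:\#\arg\max_{m'\ge 0} f(m',\varphi)\ge 2\}$ satisfies $\P^0(A)>0$. Using a measurable selection theorem (e.g.\ the argmin selecting smallest and largest elements of the closed set $\arg\max_{m'\ge 0} f(m',\varphi)$, which is non-empty by continuity of $f(\cdot,\varphi)$ and Proposition~\ref{prop.ex-MTP}), one obtains two measurable maps $\mc M'_1, \mc M'_2:\NM^0\to[0,\infty)$ with $\mc M'_1(\varphi)\ne\mc M'_2(\varphi)$ for $\varphi\in A$ and both lying in $\arg\max f(\cdot,\varphi)$. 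Setting $M'_{k,i}:=\mc M'_k(\shift_{X_i}\Phi)$ for $k=1,2$ defines two stationary markings $\Psi_1,\Psi_2\in\T(\Phi)$, both intensity-optimal by the proof of Proposition~\ref{prop.ex-MTP}. Since $\P^0(\mc M'_1(\Phi)\ne \mc M'_2(\Phi))=\P^0(A)>0$, the Palm distributions of the typical marks $M'_0$ differ, so $\mc L(\Psi_1)\ne\mc L(\Psi_2)$, contradicting distributional uniqueness.

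The main obstacle I anticipate is the measurable selection step in the ``only if'' direction, which requires $\arg\max_{m'\ge 0} f(m',\varphi)$ to be shown to be a closed, non-empty, measurable random set; this should follow from the continuity of $f(\cdot,\varphi)$ plus standard arguments, but needs care if $\M'$ is replaced by a more general space. A secondary subtlety in the ``if'' direction is ensuring that, since $\Psi$ may a priori live on a richer probability space than $\Phi$ (cf.\ Footnote~\ref{fn.Probability-space}), the identity $M'_0 = \arg\max f(\cdot,\Phi)$ under $\P^0$ really does pin down the joint law of $(\Phi,\Psi)$ via translation-covariance of the $\arg\max$; this is handled by noting that both markings realize the same deterministic functional of the base process at every point.
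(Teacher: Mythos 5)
Your proposal follows essentially the same route as the paper's proof: the ``if'' direction combines the identity $\E^0[f(M'_0,\Phi)] = \E^0[\sup_{m'\ge 0} f(m',\Phi)]$ with the pointwise bound and the uniqueness of the argmax, and the ``only if'' direction constructs two measurably selected argmax markings that differ with positive $\P^0$-probability and invokes the existence argument of Proposition~\ref{prop.ex-MTP}. One small repair: the largest element of $\arg\max_{m'\ge 0}f(m',\varphi)$ need not exist when that set is unbounded, so select instead the smallest element together with, say, the smallest element strictly above it when the set is not a singleton (the paper's own choice of ``an element that is not the largest'' has the dual defect, since $\mc M'$ is already defined as the smallest).
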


\begin{proof}
	Define the function $\mc M':\,\NM^0\longrightarrow[0, \infty)$ and the marking $\Psi = \{(X_i, M_i, M_i')\}_{i}$ as in the proof of Proposition \ref{prop.ex-MTP}.
	
	First, assume that $\arg\max_{m' \ge 0}f(m', \Phi)$ is $\P^0_\Phi$-almost surely unique. By Proposition~\ref{prop.opt-MTP} $M'_i$ is an intensity-optimal marking, and consider another intensity-optimal marking $\tilde\Psi = \{(X_i, M_i, \tilde M_i')\}_{i}$. Then,
	$$0 = \E^0[f(M'_0, \Phi) - f(\tilde M'_0, \Phi)],$$
	so that $\P^0$ almost surely $M'_0 = \tilde M'_0$ and hence both markings are distributionally equal.
	
	To show the converse, define the function $\wt{\mc M}':\,\NM^0 \longrightarrow [0, \infty)$ such that $\wt{\mc M}'(\varphi)$ denotes an element in $\arg\max_{m' \ge 0}f(m',\varphi)$ that is not the largest. If the latter set consists only of a single element, then $\wt{\mc M}'(\varphi)$ takes this value. This induces a marking $\wt\Psi = \{(X_i, M_i, \wt M_i')\}_{i}$ with $\wt M_i' = \wt{\mc M}'(\varphi)$, which is also intensity-optimal. Since we assumed optimal markings to be distributionally unique, we conclude that $\wt M_0' = M_0'$ holds $\P^0_\Phi$-almost surely. In other words, $\arg\max_{m' \ge 0}f(m',\Phi)$ is unique for $\P^0$-almost every realization of the base process $\Phi$. 
\end{proof}

\subsubsection{High fluctuations of scores} 
Finally, we illustrate how the question of uniqueness depends sensitively both on the geometry of the underlying spaces and on how strongly the scores vary. This is reminiscent of Gibbsian particle systems in statistical physics that are typically unique when the temperature is high, but can be non-unique in low-temperature regimes. A similar observation appears already in~\cite[Theorem 6]{thinning}, where hard-core thinnings of a 
Poisson Boolean model with spherical grains of uniform radii on $[0,1]$ are considered, 
maximizing "exponential" volume. More precisely, the score of the retained 
(spherical) grain of radius~$r$ is equal to $e^{ar}$, 
where $a\ge1$ is a model parameter.
The high variability of the random variable $e^{aU}$,
with $U$ uniform on $[0,1]$, implies the  
 distributional uniqueness of locally maximal hard-core thinnings of this model in any (sub- or super-critical) regime, for all sufficiently large $a$.

In what follows we shall present an example where uniqueness of the optimal marking arises, in a more subtle way, as a consequence of the high fluctuations of the scores.

Loosely speaking, consider points located at the position of the integers. At each $i \in \Z$ we may either put a particle of type `$+$' of weight $M^+_i$, a particle of type `$-$' of weight $M^-_i$, or no particle at all. The weights $\{(M^+_i,M^-_i)\}_{i \in \Z}$ are iid (across~$i$ and between~$+$ and~$-$) and the only restriction we impose is that particles of different type are forbidden to be adjacent. More formally, we consider a base point process with points
at $\Z$ marked by the elements of the space $\M = [0,\infty)^2$ capturing the weights of the two types of particles. We set $\M' = \{0,+,-\}$ allowing one to choose either no particle, or a particle of the desired type. For a stationary $\M'$-marking~\footnote{For simplicity we consider the discrete stationarity, i.e., distributional invariance with respect to translations by 
integer shifts. For continuum stationarity the whole process has to be shifted by a uniform random variable on $[0,1]$.}

$\Psi = \{(i, (M^+_i, M^-_i), M'_i)\}_{i \in \Z}$, we define the score function as
$$
\xi_\Z(\Psi) = 
\begin{cases}
 -\infty &\text{if $M'_0 \ne 0$ and $M'_i = -M'_o$ for some $i \in \{\pm1\}$},\\ 
 M^{M'_0}_0 &\text{otherwise,}
\end{cases}
$$
where we adhere to the convention that $M_0^0 = 0$.
This example is reminiscent of the Widom-Rowlinson model from statistical physics that also implements an exclusion rule for particles of different types \cite{WiRo70}. 
 As long as the weights $M_i^+$ and $M_i^+$ have densities, the score fluctuations are sufficiently strong to enforce distributional uniqueness.

\begin{proposition}[Uniqueness on the integers]
 \label{uniqThm}
 If $\mc L(M_0^+) = \mc L(M_0^-)$ admits the density with respect to Lebesgue measure, then there is a unique locally $\xi_\Z$-optimal marking.
\end{proposition}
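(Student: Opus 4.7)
The plan is to prove pathwise uniqueness after coupling two locally optimal stationary markings $\Psi,\Psi'$ on a common probability space sharing the same weights $(M_i^+,M_i^-)_{i\in\Z}$, from which distributional uniqueness follows. I would proceed in three steps: (i) extract the coarse skeleton of any locally optimal marking via single-site flips, (ii) rule out finite disagreement components using the density of the weights, and (iii) exclude infinite disagreement components.

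For Step (i), I note that the density assumption forces $M_i^+,M_i^->0$ almost surely. If a locally optimal $\Psi$ has $M_i'=0$ at some site $i$, then the single-site flip to $+$ is admissible unless a neighbor carries $-$, and, when admissible, it strictly improves the score by $M_i^+>0$; an analogous argument handles the flip to $-$. Hence every $0$-site must have exactly one $+$-neighbor and one $-$-neighbor, so $\Psi$ consists of alternating maximal $+$- and $-$-blocks separated by single $0$'s, with no two adjacent zeros. In Step (ii), couple $\Psi,\Psi'$ and set $D=\{i:M_i'(\Psi)\ne M_i'(\Psi')\}$. For any finite connected component $F\subset D$, swapping $\Psi|_F\leftrightarrow\Psi'|_F$ yields admissible finite modifications of both markings, since $\Psi$ and $\Psi'$ agree on $F^c$ and so the admissibility at the boundary of $F$ is inherited from the unmodified sides. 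Two applications of local optimality then give $\sum_{i\in F}[M_i^{M_i'(\Psi)}-M_i^{M_i'(\Psi')}]=0$. This is a nontrivial linear combination with coefficients in $\{-1,0,+1\}$ of the jointly continuous independent random variables $\{M_i^\pm:i\in F\}$; conditioning on the countably many admissible shapes of $(F,\Psi|_F,\Psi'|_F)$ shows that such an event has probability zero, and hence almost surely every connected component of $D$ is infinite.

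Step (iii) is the main obstacle. The idea is to find, almost surely, arbitrarily distant cut points $c_1<c_2$ bounding a finite window within which the disagreement can be localised and the argument of Step (ii) reapplied. Natural candidates are positions where both $\Psi$ and $\Psi'$ carry the $0$-mark, because $0$ is compatible with any neighbor and the cut-off modification then remains admissible on both sides. In the generic case the skeleton from Step (i) ensures that each of $\Psi,\Psi'$ has $0$-sites at positive intensity (the degenerate all-$+$ or all-$-$ markings can be excluded directly by comparing entire block sums via the density), and I would then exploit the interface-shift inequalities $M_{z-1}^+\ge M_z^-$ and $M_z^+\le M_{z+1}^-$ at each interface $z$, derived by considering one-site displacements of an interface, together with the independence and continuity of the weights, to ensure that the common zero-set of $\Psi$ and $\Psi'$ is almost surely infinite. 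Between successive common cut points the disagreement becomes a finite component, already excluded by Step (ii), forcing $D=\emptyset$ almost surely and yielding the claimed distributional uniqueness of locally $\xi_\Z$-optimal markings.
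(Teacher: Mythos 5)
Your proposal takes a genuinely different route from the paper's. The paper's proof is a \emph{forcing} argument: it defines \emph{blocking intervals} $[i,j]$ (intervals where every $M_k^+$ exceeds every $M_\ell^-$ and the sum $\sum_{i\le k\le j}M_k^+$ exceeds $\sum_{i-1\le k\le j+1}M_k^-$), which are determined by the weights alone, shows that they occur at positive intensity, and proves by a short case analysis that \emph{every} locally optimal marking must carry a $+$ somewhere inside each blocking interval. These common anchors, occurring a.s.\ infinitely often, then pin down the marking uniquely between consecutive anchors. Your approach instead couples two locally optimal markings and studies the disagreement set $D$; this is a legitimate alternative strategy (disagreement percolation in $d=1$). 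Your Steps (i) and (ii) are correct: the single-site-flip argument does force the block/interface structure, and for a finite connected component $F$ of $D$ the two swaps are admissible (because $F$ is a full component, so no boundary conflicts arise), local optimality in both directions gives $\sum_{i\in F}\bigl[M_i^{M_i'(\Psi)}-M_i^{M_i'(\Psi')}\bigr]=0$, and the density assumption makes this a null event after conditioning on the countably many shapes.

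However, Step (iii) has a genuine gap, and it is the crux of your approach. You need that the \emph{common} zero-set of $\Psi$ and $\Psi'$ is a.s.\ two-sidedly infinite; otherwise $D$ may have an infinite component to which Step (ii) does not apply. Two locally optimal markings could a priori have disjoint (or only finitely overlapping) interface positions, and the interface-shift inequalities $M_{z-1}^+>M_z^-$, $M_z^+<M_{z+1}^-$ at an interface $z$ constrain where interfaces may lie but do not force $\Psi$ and $\Psi'$ to place interfaces at the \emph{same} sites: many $z$ can satisfy the inequalities simultaneously, and there is no obvious rigidity forcing the two sequences of interfaces to coincide infinitely often. Excluding the all-$+$ and all-$-$ markings can be made rigorous (e.g.\ via Borel--Cantelli, finding arbitrarily long runs where $M_k^->M_k^++\delta$), and one can similarly show each marking has infinitely many zeros; but this still does not give you common zeros. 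The paper's blocking-interval construction is precisely what circumvents this: it produces sites where the mark is pinned for \emph{every} locally optimal marking, so the anchors are automatically shared, which is what your argument lacks.
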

\begin{proof}
	The idea of proof is to construct shields against long-range dependence by considering blocks of adjacent sites that all have larger '$+$' than '$-$' marks. More precisely, a discrete interval $[i, j] \subset \Z$ is \emph{blocking} if
	\begin{enumerate}
		\item $M_k^+ > M_\ell^-$ for all $k, \ell \in [i, j]$, and
		\item $\sum_{i \le k \le j} M_k^+ > \sum_{i-1 \le k \le j+1} M_k^-$.
	\end{enumerate}
	Since the marks $\{M_k^+, M_k^-\}_{k \in \Z}$ are iid and absolutely continuous, blocking intervals occur with a positive intensity. Now, we assert that if $[i, j]$ is blocking, then there exists $k \in [i, j]$ such that $M_k' = +$ for every locally optimal thinning $\Psi$. Once this assertion is established, the proof of the theorem is complete. Indeed, if $k<\ell$ are such that $M_k' = M_\ell' = +$, then local optimality already dictates the specific values of $M_m'$ for any $m \in [k, \ell]$.

	It remains to prove the assertion. For this purpose, we distinguish between different possible values of $M_{i-1}'$ and $M_{j+1}'$. If neither of them equals $-$, then we conclude from property (1) that $M_m' = +$ for all $m \in [i, j]$. Hence, we may assume for instance that $M_{i-1}' = -$. If additionally $M_{j+1}' = -$, then property (2) implies that there exists $m \in [i, j]$ with $M_m' = +$. We claim that for this choice of $m$, we still have $M_m' = +$, even if $M_{j+1} = +$ or $M_{j+1} = o$. Indeed, by property (1), there cannot exist $i \le i' \le j' \le j$ such that $M_{i'-1}' = M_{j'+1}' = o$ and $M_k' = -$ for all $k \in [i', j']$. Hence, there exists $m' \in [i, j]$ such that 
	$$M_k' = \begin{cases}
		- &\text{ if $i \le k < m'$},\\
		0 &\text{ if $ k = m'$},\\
		+ &\text{ if $m' < k \le j$}.\\
	\end{cases}$$
	In fact, now property (1) gives that $m' = i$, so that $M_m' = +$, as asserted.
\end{proof}

Conversely, we now provide an example where enforcing boundary constraints is highly costly, thereby giving rise to multiple optimal markings. In order to create large boundaries, we leave the amenable world and study optimal markings on the 3-regular tree $\B$ with distinguished element $o \in \B$. Although this setting is not covered by the stationary framework laid out in this section, it still makes perfect sense to talk about locally optimal markings. 

More precisely, we proceed as in Proposition~\ref{uniqThm} and let $\{M^{\pm}_b\}_{b \in \B}$ be iid nonnegative random variables. After choosing a distinguished root $o \in \B$, the score function reads 
$$
\xi_\B(\Psi) = 
\begin{cases}
 -\infty &\text{if $M'_o \ne 0$ and $M'_b = -M'_o$ for some $b \sim o$},\\ 
 M^{M'_o}_o &\text{otherwise},
\end{cases}
$$
where we adhere again to the convention that $M_o^0 = 0$.
If the weights do not fluctuate too wildly, then both markings $M'_b = +$ for all $b \in \B$ and $M'_b = -$ for all $b \in \B$ are locally $\xi_\B$-optimal.

The conceptual reason behind the non-uniqueness of locally optimal markings in this example lies in the non-amenability of $\B$. Loosely speaking, the boundary of a finite number of negative particles in an infinite sea of positive particles is so large that the fluctuation gains cannot compensate for the large losses incurred in the forbidden zone at the boundary.
\begin{proposition}[Non-uniqueness on a tree]
 \label{nUniqThm}
 If $\mc L(M^+_o) = \mc L(M^-_o)$ is concentrated in the interval $[0.9, 1.1]$, then there exist multiple locally $\xi_\B$-optimal markings.
\end{proposition}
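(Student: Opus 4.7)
I would prove non-uniqueness by exhibiting two distinct locally optimal markings. The natural candidates are the constant marking $\psi_+$ assigning $M'_b = +$ to every $b \in \B$ and, symmetrically, the constant marking $\psi_-$; both are manifestly admissible since there are no adjacent pairs of opposite marks. By the symmetry of $\xi_\B$ under the swap $+ \leftrightarrow -$ together with the hypothesis $\mc L(M_o^+) = \mc L(M_o^-)$, it suffices to verify local optimality of $\psi_+$ almost surely on the event $\{M_b^\pm \in [0.9, 1.1] \text{ for all } b\}$, which has full probability.

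Fix any admissible modification $\psi'$ of $\psi_+$ differing on a finite set $F \subset \B$ and decompose $F = F_- \sqcup F_0$ according to whether the new mark at $b$ is $-$ or $0$. Two structural observations drive the computation. First, if $b \notin F$ then $b$ still carries mark $+$, so admissibility of $\psi'$ forbids any neighbor of $b$ from lying in $F_-$; consequently the score at $b$ is unchanged. Second, applied at a vertex of $F_-$, admissibility forces every non-$F_-$ neighbor to lie in $F_0$, so the external vertex boundary satisfies $\partial_v F_- \subseteq F_0$. Combining these, only sites in $F$ contribute to the score difference, and using $M_b^\pm \in [0.9, 1.1]$,
$$\psi' \Delta_{\xi_\B} \psi_+ \;=\; \sum_{b \in F_-}(M_b^- - M_b^+) \;-\; \sum_{b \in F_0} M_b^+ \;\le\; 0.2\,|F_-| - 0.9\,|F_0|.$$

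The final step is a short isoperimetric estimate leveraging the non-amenability of the 3-regular tree. For any nonempty finite $F_- \subset \B$, edge counting and the tree structure give $|\partial_e F_-| = 3|F_-| - 2|E(F_-)| = |F_-| + 2\,c(F_-) \ge |F_-| + 2$, where $c(F_-)$ is the number of connected components. Since every vertex outside $F_-$ has degree three, $|\partial_e F_-| \le 3\,|\partial_v F_-|$, so $|F_0| \ge |\partial_v F_-| \ge (|F_-|+2)/3$. Plugging this into the previous bound yields $\psi' \Delta_{\xi_\B} \psi_+ \le -0.1\,|F_-| - 0.6 < 0$ when $F_- \ne \emptyset$; the case $F_- = \emptyset$ is immediate since then the score difference reduces to $-\sum_{b \in F_0} M_b^+ \le 0$.

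The conceptual heart of the argument, and its only real obstacle, is the isoperimetric inequality: on an amenable graph such as $\Z$ the bound $|\partial_v F_-|$ is only a constant, which is precisely the mechanism that drives uniqueness in Proposition \ref{uniqThm}; on $\B$, non-amenability forces $|F_0|$ to scale linearly with $|F_-|$, and the narrow weight window $[0.9, 1.1]$ is insufficient to compensate for this boundary penalty. Care must be taken that the admissibility-forced buffer $F_0$ is correctly identified with (a superset of) $\partial_v F_-$ and that score changes outside $F$ indeed vanish, but both points are handled by the two structural observations above.
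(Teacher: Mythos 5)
Your proof is correct and takes essentially the same route as the paper: show that the all-plus marking $\psi_+$ (and by symmetry $\psi_-$) is locally optimal, decompose the score difference over the sites that changed, and defeat the modest gain on $F_-$ by the inevitable boundary loss on $F_0$ via non-amenability of the $3$-regular tree. The only substantive difference is in the isoperimetric step. The paper asserts, ``by an induction argument,'' that $\#\partial V \ge \#V$ for any finite $V \subset \B$, then computes $1.1\#V - 0.9(\#V + \#\partial V) \le -0.7\#V < 0$. You instead give an explicit edge-count: $|\partial_e F_-| = 3|F_-| - 2|E(F_-)| = |F_-| + 2c(F_-) \ge |F_-| + 2$, and then $|\partial_v F_-| \ge |\partial_e F_-|/3 \ge (|F_-| + 2)/3$, which is weaker than what the paper claims but is proved transparently and still yields $0.2|F_-| - 0.9|F_0| \le -0.1|F_-| - 0.6 < 0$. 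Two further small points in your favor: you do not assume the modification has the exact shape $(V, \partial V, \text{rest})$ the paper posits, but correctly note that admissibility only forces $\partial_v F_- \subseteq F_0$ (extra zeros only hurt, so the bound still applies); and you explicitly verify that sites outside $F$ have unchanged scores because admissibility of $\psi'$ forbids any $F_-$-neighbor of a retained $+$. In short, same strategy, but you trade the paper's stronger-but-unproven inequality $\#\partial V \ge \#V$ for a weaker, self-contained estimate that still closes the argument; that is a perfectly reasonable and arguably cleaner choice. (For completeness: the paper's claim $\#\partial V \ge \#V$, indeed $\ge \#V + 2$, is true — one can see it by noting that the induced subgraph on $V \cup \partial_v V$ is a forest and counting edges — so both arguments are valid.)
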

\begin{proof}
	For a vertex set $V \subset \B$ let $\partial V$ denote the vertex boundary of $V$, i.e., all vertices outside $V$ that are adjacent to a vertex in $V$. 
	We start from the marking $\Psi^+$ consisting entirely of pluses, i.e., $M_b' = +$ for all $b \in \B$. We claim that $\Psi^+$ is locally optimal. Since by symmetry, then the same is true for $\Psi^-$, establishing the claim will conclude the proof.

	Let $\Psi$ be an arbitrary marking disagreeing from $\Psi^+$ in finitely many points. That is, there exists a finite vertex set $V \subset \B$ such that in $\Psi$ we have
	\begin{enumerate}
		\item $M_b' = -$ for all $b \in V$,
		\item $M_b' = 0$ for all $b \in \partial V$, and
		\item $M_b' = +$ for all $b \in \B \setminus (V \cup \partial V)$. 
	\end{enumerate}
	Now, using induction argument, one can show that the non-amenability of the graph implies that $\# \partial V \ge \# V$ for any $V \subset \B$. Since the marks are confined to the interval $[0.9, 1.1]$, we therefore conclude that 
	$$\Psi \Delta_{\B} \Psi^+ \le 1.1\#V - 0.9(\#V + \#\partial V) \le -0.7\#V < 0.$$
	Hence, $\Psi^+$ is locally optimal, as asserted.
\end{proof}
From the above proofs it should be evident that establishing uniqueness based on score variability is more easy in lower dimensions where the boundaries of finite sets are smaller. Note, our one-dimensional and tree examples are extreme ones in this regard. It remains an open question whether uniqueness holds for the optimal marking in Widom-Rowlinson type models in higher (but finite) dimension.
    \section{Proof of existence of optimal markings}
\label{existSec}
In this section, we prove Theorem~\ref{uniqThm}.
Similarly to the proof of~\cite[Theorem 1]{thinning}, intensity-optimal markings are obtained as abstract, subsequential distributional limits. 
We first state that under the assumption of $\M'$-tightness, the family $\T(\Phi)$ of stationary markings is closed under distributional
limits with respect to the vague topology on~$\NMM$ (cf~\cite[Section~4.1]{randMeas}). We defer the technical proof to the end of this section.

\begin{lemma}[Stationary, $\M'$-tight markings under distributional limits]
	\label{markStabLem}
	Let $\T' \subset \T(\Phi)$ be an $\M'$-tight subset of stationary markings of $\Phi$. Let $\Psi_n\in\T'$ be such that 
 $\Psi_n$ converge in distribution to a random measure $\Psi$ on $\NMM$. Then $\Psi \in \T(\Phi)$,
and the Palm  distributions of $\Psi_n$ converge in distribution to the Palm distribution of $\Psi$. 
\end{lemma}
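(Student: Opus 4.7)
The plan is to verify three things about the limit $\Psi$: stationarity, agreement of its spatial-mark projection with $\mc L(\Phi)$, and convergence of the Palm distributions. The substantive step will be the second, because the projection $\pi:\,\NMM \to \NM$ forgetting $\M'$-marks is not vaguely continuous in general---marks may escape to infinity in $\M'$ while the underlying points survive---and the $\M'$-tightness hypothesis is precisely what rules out this pathology in the limit.

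Stationarity of $\Psi$ is immediate: for each $y \in \R^d$ the shift $\shift_y$ is vaguely continuous on $\NMM$, so $\mc L(\Psi_n) = \mc L(\shift_y \Psi_n)$ passes to the limit by the continuous mapping theorem.

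To identify $\pi(\Psi)$ with $\Phi$ in distribution, the strategy is to approximate the discontinuous map $\pi$ by vaguely continuous ones, controlling the approximation error uniformly in $n$ via $\M'$-tightness. Given $\e>0$, pick by~\eqref{xisTightEq} a compact $K' \subset \M'$ with $\sup_n \P^0(M_0' \notin K') \le \e$; the refined Campbell formula converts this into the uniform spatial estimate
\begin{align*}
\sup_n \E\bigl[\#\{i:\, X_i \in B,\ M_i' \notin K'\}\bigr] \le \e \la |B|
\end{align*}
for every bounded Borel $B \subset \R^d$. Choose a continuous cut-off $h:\,\M' \to [0,1]$ with compact support that equals $1$ on $K'$. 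For any $g \in C_c(\R^d \times \M)$, the map $\psi \mapsto \int g(x,m)\,h(m')\,\psi(\d(x,m,m'))$ is vaguely continuous on $\NMM$, while its discrepancy from $\int g\,\d\pi(\psi)$ is pointwise bounded by $\|g\|_\infty\, \#\{i:\, X_i \in \mathrm{supp}(g),\ M_i' \notin K'\}$, which has expectation of order $\e$ uniformly in $\psi = \Psi_n$. The same bound is inherited by the limit $\Psi$ via Fatou. Applying the continuous mapping theorem to the cut-off functional and letting $\e \downarrow 0$, a standard Portmanteau-type argument yields $\pi(\Psi_n) \Rightarrow \pi(\Psi)$ vaguely on $\NM$; since each $\pi(\Psi_n) \stackrel{d}{=} \Phi$, we conclude $\pi(\Psi) \stackrel{d}{=} \Phi$, and hence $\Psi \in \T(\Phi)$.

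For the Palm convergence one invokes the general Palm-convergence theorem of Appendix~\ref{appendix}, whose hypotheses---weak convergence of the stationary marked point processes together with a common finite spatial intensity $\la$---are precisely what has just been established. The main obstacle throughout is the projection step, since it forces one to combine vague convergence, uniform $\M'$-tightness, and the prescribed projection law in a single argument that circumvents the absence of continuity of $\pi$.
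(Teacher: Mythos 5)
Your proposal is correct and takes essentially the same route as the paper: both reduce the problem to controlling the failure of vague continuity of the projection $\pi$ by approximating with compactly supported cut-offs in the $\M'$-coordinate, using $\M'$-tightness plus the Campbell formula to make this uniform in $n$, and both invoke the general Palm-convergence lemma from the Appendix for the last assertion. The only cosmetic difference is that the paper packages the approximation argument inside Laplace functionals $\LL(\Psi_n)(g\otimes g'_k)$, whereas you work directly with the integral functionals $\int g\,h\,\d\Psi_n$ and a Portmanteau-type $\e/3$ argument; these are the same idea in two standard guises.
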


Lemma~\ref{markStabLem} reduces the existence proof to establishing a continuity property of the $\xi$-intensity. In the setting of \cite{thinning}, the score function itself is already continuous outside a zero set, but now, in the general case, we have to introduce continuous approximations. To avoid unnecessary redundancy, we will be shorter on the parts of the proof that are similar to \cite[Theorem 1]{thinning} and provide more details for the steps differing substantially from the thinning setting.

%
%
\begin{proof}[Proof of Theorem \ref{existThm}]
Let $\{\Psi_n\}_{n\ge1}$ be a sequence in $\T(\Phi)$ such that
	$$\lim_{n \to \infty} \gh(\Psi_n) = \g_{\xi,\opt}(\Phi).$$
	 Assume $\g_{\xi,\opt}(\Phi)>-\infty$, otherwise the statement of Theorem~\ref{existThm} is trivially satisfied with all markings being intensity-optimal.
	Since $\T(\Phi)$ consists of markings of the common point process $\Phi$, we conclude from statement~\eqref{i.lemma} of Lemma~\ref{markStabLem-general} that $\T(\Phi)$ is tight as a set of marked point processes. Hence, after possibly passing to a subsequence, the sequence $\{\Psi_n\}_{n\ge1}$ converges weakly to a stationary marked point process $\Psi$. 
	Due to assumption {\bf (T)} of Proposition~\ref{existThm}, by Lemma~\ref{markStabLem}, $\Psi\in\T(\Phi)$.
	 Since $\gh(\Psi) \le \g_{\xi,\opt}(\Phi)$ holds by the definition of $\g_{\xi, \opt}(\Phi)$, it remains to show $\gh(\Psi) \ge \g_{\xi,\opt}(\Phi)$.

	%
	%
	First, we claim that $\Psi$ is admissible.
	{ This follows immediately by Portmanteau Theorem if the set of admissible markings is closed, which is one option in condition {\bf (C)}. If not, 	by Lemma \ref{markStabLem}, the Palm versions $\Psi_n^0$ of $\Psi_n$ converge in distribution to the Palm version $\Psi^0$ of $\Psi$. By the other option in condition {\bf (C)},} the score function is upper semicontinuous, so that for $\xi$ taking values in $[0, \infty) \cup \{-\infty\}$, the Portmanteau Theorem implies that 
	$$\P^0(\xi(\Psi) \ge 0) \ge \limsup_{n \to \infty}\P^0(\xi(\Psi_n) \ge 0) = 1.$$
	Similarly, for $\xi$ taking values in $[-\infty, 0]$, the Portmanteau Theorem implies that for every $M \ge 0$	$$\P^0(\xi(\Psi) > -\infty) = \lim_{M \to \infty}\P^0(\xi(\Psi) \ge -M) \ge \lim_{M \to \infty}\limsup_{n \to \infty}\P^0(\xi(\Psi_n) \ge -M) = 1,$$
	where the last equality is due to our assumption $\g_{\xi,\opt}(\Phi)>-\infty$,
	so that $\Psi$ is admissible.

	By assumption, the discontinuity set of $\xi_\e$ is a zero-set with respect to the Palm distribution of any process in $\T(\Phi)$, including the limit $\Psi$ of $\Psi_n$,
	hence $\xi_\e(\Psi_n^0)$ converge to $\xi_\e(\Psi^0)$ in distribution. Moreover, the $\{\xi_\e(\Psi_n^0)\}_{n \ge 1}$ are uniformly integrable because there exists an integrable majorant of~$\xi_\e$
	by condition {\bf (A)}. Hence, invoking \cite[Lemma 4.11]{fmp}, we arrive at
	$$\g_{\xi_\e}(\Psi) = \la\E^0[\xi_\e(\Psi)] = \la\lim_{n \to \infty} \E^0[\xi_\e(\Psi_n)] = \lim_{n \to \infty} \g_{\xi_\e}(\Psi_n).$$

	When sending $\e \to 0$, then $\g_{\xi_\e}(\Psi)$ tends to $\g_{\xi}(\Psi)$. This follows from monotone convergence for negative score functions and from dominated convergence for positive ones. Hence,
	$$\gh(\Psi) = \lim_{\e \to 0}\g_{\xi_\e}(\Psi) = \lim_{\e \to 0}\lim_{n \to \infty}\g_{\xi_\e}(\Psi_n) \ge \lim_{n \to \infty}\g_{\xi}(\Psi_n) = \g_{\xi,\opt},$$
	as asserted. 
\end{proof}

%
%
We conclude this section by proving Lemma \ref{markStabLem}.
\begin{proof}[Proof of Lemma \ref{markStabLem}]
By the second statement of Lemma~\ref{markStabLem-general}, $\Psi$ is a stationary marked point process on $\R^d\times\M\times\M'$ with intensity smaller or equal to~$\lambda$. In order to prove that the distribution of the projection $\pi(\Psi)$ of $\Psi$ on $\R^d\times\M$ is equal to the distribution of the base process $\Phi$, it suffices to establish equality of the Laplace functionals (cf~\cite[Theorem 4.2]{randMeas})
\begin{equation}\label{e.Laplace-equality}
\LL(\pi(\Psi))(g)=\LL(\Phi)(g),
\end{equation}
for all non-negative, continuous, boundedly-supported functions~$g$ on~$\NM$.
Here and in what follows $\LL(\Upsilon)(h):=\E[e^{-\int h\,\d\Upsilon}]$ denotes the Laplace transform of the random measure~$\Upsilon$ evaluated on function $h$, both defined
(depending on the context) either on $\R^d\times\M$ or $\R^d\times\M\times\M'$. 

Denote by $1'$ a constant function on $\M'$ equal to~1.
Then, since $\Psi\in\T(\Phi)$,
$$\LL(\Psi_n)(g\otimes1')=\LL(\pi(\Psi_n))(g)=\LL(\Phi)(g).$$ However, we cannot deduce convergence of $\LL(\Psi_n)(g\otimes1')$ to $\LL(\Psi)(g\otimes1')=\LL(\pi(\Psi))(g)$ directly from the convergence of $\Psi_n$ to $\Psi$
 since the tensor product $g\otimes1'$ is not compactly supported on $\NMM$ when $\M'$ is not compact.
For this reason we approximate $1'$ by non-negative, continuous, compactly supported Borel functions $g'_k$ on $\M'$, such that $1_{K'_k}\le g'_k\le 1$ , for some sequence of compact $K'_k\subset\M'$ increasing to~$\M'$.
Now, for any $k\ge 1$
$$\lim_{n\to\infty}\LL(\Psi_n)(g\otimes g'_k)= \LL(\Psi)(g\otimes g'_k).$$
In order to conclude~\eqref{e.Laplace-equality}, it is enough to show that that 
\begin{align}\label{e.convergence1}
\lim_{k\to\infty}\LL(\Psi)(g\otimes g'_k&)= \LL(\Psi)(g\otimes1')\\
\noalign{and}
\lim_{k\to\infty}\sup_{n \ge 1}\big|\LL(\Psi_n)(g\otimes g'_k) - \LL(\Psi_n)(g\otimes1')\big|&= 0.\label{e.convergence2}
\end{align}
For~\eqref{e.convergence1}, we use some simple bounds and Campbell's theorem to observe that 
\begin{align*}
0\le \LL(\Psi)(g\otimes g'_k)- \LL(\Psi)(g\otimes1')&=
\E\Bigl[e^{-\int gg'_k\,\d\Psi}
\Bigl(1-e^{-\int g (1 - g'_k)\,\d\Psi}\Bigr)\Bigr]\\ 
&\le \E\Bigl[\int g (1 - g'_k)\,\d\Psi\Bigr]\\
&\le \lambda C ||g||_\infty \E^0_{\Psi}[1-g'_k(M'_0)]\\
&\le \lambda C ||g||_\infty \P^0_{\Psi}(M'_0\not \in K'_k)]\to_{k\to\infty} 0,
  \end{align*}
where $C$ is the Lebesgue measure of the projection of the support of~$g$ on~$\R^d$,
with the second inequality holding since $\Psi$ is a stationary marked point process on $\R^d\times\M\times\M'$ with intensity $\lambda$.
For~\eqref{e.convergence2}, 
observe similarly to $\Psi$,
\begin{align*}
0\le \LL(\Psi_n)(g\otimes g'_k)- \LL(\Psi_n)(g\otimes1') 
&\le \E\Bigl[\int g (1-g'_k)\,\d\Psi_n\Bigr]\\
&\le \lambda \int \E^0_{\Psi_n}[g(x,M_0)(1-g'_k(M'_0))]\\
&\le \lambda C ||g||_\infty \P^0_{\Psi_n}\{M'_0\not \in K'_k)]\to_{k\to\infty} 0\quad \text{uniformly in $n$},
\end{align*}
and where the last statement is due to $M'$-tightness of $\Psi_n$.

The convergence of the Palm distributions follows from the statement~\eqref{iii.lemma} of Lemma~\ref{markStabLem-general}.

	\end{proof}

    \section{Proof of equivalence of optimality criteria}
\label{equivSec}

The equivalence result \cite[Theorem 3]{thinning} for thinnings lays out the general strategy for the proofs of Theorems \ref{equiv1Thm} and \ref{equiv2Thm}. However, some of the most critical steps in this special case do not carry over easily to long-range dependencies, and it is here that we provide finer arguments based on the stabilization properties introduced in Section \ref{modelSec}. Therefore, we only provide outlines of the steps that are similar to \cite[Theorem 3]{thinning} and go into details when the stabilization properties enter the stage.

\subsection{From intensity-optimality to local optimality}
\label{ss.int-to-local}
We proceed by contraposition, assuming that $\Psi$ is not locally optimal. Then, by definition, we can achieve a higher $\xi$-score by modifying the marks in a finite number of points. The key step is to leverage stationarity in order to find a stationary family of such modifications. The total impact of these modifications on the score of the typical point is evaluated
using the mass transport principle.

To ease notation, in the rest of this section we write $\Delta_x$ instead of $\Delta_{\xi, x}$ and $\psi\Delta_B\psi' := \sum_{x\in \varphi\cap B} \psi \Delta_x \psi'$ 
 for the sum of the score differences associated with markings $\psi, \psi'$ of a base configuration~$\varphi$, and evaluated at points in a Borel set $B\subset\R^d$.

 Loosely speaking, if a marking $\Psi$ is not locally optimal, then it is possible to achieve a net increase of $\xi$-scores by modifying finitely many marks. More precisely, a \emph{valid swap} of $\Psi$ is a marking $\psi$ of $\Phi$ satisfying $\#(\psi \setminus \Psi) < \infty$, $\sum_{i}(\psi \Delta_{X_i} \Psi)_+ < \infty$ and $\psi\Delta_{\R^d}\Psi > 0$. In order to paste together different swaps, we need to know that they do not compromise on each other's ability to increase the $\xi$-scores if they occur at a sufficient distance. 
 To make this precise, we first associate each valid swap $\psi$ with a specific "center" point $c(\psi)$, say the lexicographic minimum of $\psi \setminus \Psi$. Hence, we can think of compatible valid swaps as a supplementary  marking of $\Psi$.

 The arguments under the assumption of strong stabilization on the one hand and under internal and external stabilization on the other hand are fairly similar. Nevertheless, in order to render the presentation more accessible, we discuss both of them separately.

 %
 %
 We begin with the simpler setting, namely this of strong stabilization. To apply the stabilization bounds, we need to change the markings in a swap one after another. Since this may change the stabilization radii, for any marking $\psi$ of $\Phi=\pi(\Psi)$ and $X_i\in\Phi$
 we introduce 
 $$R_i^{\mmax}(\psi) = \max\{R(\shift_{X_i}(\psi)), R(\shift_{X_i}(\Psi))\}.$$
 Next, let $$\Theta(\psi) = \bigcup_{X_i \in \psi \setminus \Psi} Q_{R_i^{\mmax}(\psi)}(X_i)$$
denote the \emph{influence domain} of the valid swap $\psi$ and define a \emph{$b$-valid swap} of $\Psi$ to be a valid swap $\psi$ such that 
 \begin{enumerate}
	 \item both the diameter and the number of elements of $\Phi \cap \Theta(\psi)$ is at most $b$, and
	 \item 
		 $\psi \Delta_{\R^d} \Psi > 0$.
 \end{enumerate}
 Two valid swaps $\psi_1, \psi_2$ of $\Psi$ are \emph{compatible} if
$$\Big(\bigcup_{X_i \in \Theta(\psi_1)} Q_{R_i^{\mmax}(\psi_1)}(X_i)\Big) \cap \Big(\bigcup_{X_i \in \Theta(\psi_2)} Q_{{R_i^{\mmax}(\psi_2)}}(X_i)\Big) = \es.$$ 
Note, we require not only that influence domains are disjoint, but that the same holds true for a ``second iteration'' of the influence domains.

 %
 %
If $\Psi$ is not locally optimal, then the following analog of \cite[Lemma 4]{thinning} shows how to create a stationary configuration of compatible $b$-valid swaps.
 \begin{lemma}
  \label{compSwapLem}
	 If $\Psi$ is not locally optimal, then for some $b<\infty$ there exists 
	 a stationary marked point process $\mc S = \{(Y_j,\shift_{Y_j}\Psi_j)\}$ 
	of positive intensity, such that 
	$\Psi_j\in\NMM$ are pairwise compatible, $b$-valid swaps of $\Psi$ and $Y_j=c(\Psi_j)$ are the centers of these swaps.
 \end{lemma}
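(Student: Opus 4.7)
The plan is to first leverage non-local-optimality and strong stabilization to produce, with positive probability, a single $b$-valid swap with bounded iterated influence; then to convert it into a stationary marked point process of swap centers by a shift-equivariant measurable selection; and finally to enforce pairwise compatibility by a Matérn-type hard-core thinning.

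Since $\Psi$ is not locally optimal, the event $E := \{\Psi \text{ admits a valid swap}\}$ has positive $\P$-probability. For any valid swap $\psi$, the set $\psi \setminus \Psi$ is finite and each $R_i^{\mmax}(\psi)$ is a.s.\ finite by strong stabilization, so $\Theta(\psi)$ is a.s.\ bounded and $\Phi \cap \Theta(\psi)$ is a.s.\ finite. Reapplying strong stabilization at the points of $\Phi \cap \Theta(\psi)$ shows that the iterated influence domain is a.s.\ bounded too. Hence $E = \bigcup_b E_b$, where $E_b$ denotes the event that $\Psi$ admits a $b$-valid swap whose iterated influence domain further has diameter at most $b$; this mild reinforcement of the definition of $b$-validity is harmless because it leaves the argument intact. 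By monotone convergence, $\P(E_b)>0$ for some finite $b$, which we fix.

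Next, we invoke a Kuratowski--Ryll-Nardzewski measurable selection (applicable because, thanks to second countability of $\M'$, the set of $b$-valid swaps of a given $\Psi'$ centered at the origin is parameterizable by a Polish space) to produce a measurable map $\tau$ sending every $\Psi' \in E_b$ to a canonical $b$-valid swap of $\Psi'$ centered at $o$, and sending every $\Psi' \notin E_b$ to a default value $*$. Define
\[
\mc S_1:=\{X_i\in\Phi:\, \tau(\shift_{X_i}\Psi)\ne *\}
\]
and equip each $Y_j\in\mc S_1$ with the mark $\tau(\shift_{Y_j}\Psi)$. By construction, the mark at $Y_j$, when shifted back by $-Y_j$, is a $b$-valid swap of $\Psi$ centered at $Y_j$, and $\mc S_1$ is a stationary, marked thinning of $\Phi$. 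If $\Psi\in E_b$ with some $b$-valid swap centered at $X_i$, then $\shift_{X_i}\Psi$ has a $b$-valid swap centered at $o$ and so $X_i\in\mc S_1$; hence $\P(\mc S_1\ne\emptyset)\ge \P(E_b)>0$. Since $\mc S_1\subset\Phi$ is locally finite and stationary, its intensity must be strictly positive, for otherwise $\E[\#(\mc S_1\cap Q_n)]=0$ for every $n$, forcing $\mc S_1=\emptyset$ a.s.

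Finally, attach iid Uniform$[0,1]$ marks $U_j$ to the points of $\mc S_1$ (independently of everything else) and retain $(Y_j,\tau(\shift_{Y_j}\Psi))$ in the thinned process $\mc S$ iff $U_j<U_k$ for every $Y_k\in\mc S_1$ whose associated swap is not compatible with the one at $Y_j$. The surviving marks are pairwise compatible $b$-valid swaps by construction. The main remaining obstacle is verifying positive intensity of $\mc S$: here the reinforced $b$-validity pays off, as the diameter bound on iterated influence domains ensures that conflicts between $Y_j$ and $Y_k$ can only occur when $|Y_j-Y_k|\le 2b$. Under $\P^0_{\mc S_1}$ the number $N$ of conflicting neighbors of the typical point $Y_0$ is then at most $\#(\mc S_1\cap Q_{4b})$, which is a.s.\ finite by local finiteness, so the retention probability is $\E^0_{\mc S_1}[1/(N+1)]>0$, concluding the argument.
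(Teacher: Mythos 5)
Your proof is correct and follows the same overall strategy as the paper's: pass from non-local-optimality to a positive-intensity family of valid swaps, truncate to $b$-valid swaps for some finite $b$, and then extract a pairwise-compatible subfamily of positive intensity via a Mat\'ern-type dependent thinning. The paper compresses this into four sentences and defers the Mat\'ern step to \cite[Lemma 4]{thinning}; you fill in the gaps explicitly with a Kuratowski--Ryll-Nardzewski measurable selection to build the stationary swap process $\mc S_1$, and with the standard $\E^0[1/(N_0+1)]$ retention probability argument for the type-II Mat\'ern thinning. You also correctly notice and repair a subtlety the paper leaves implicit: the definition of a $b$-valid swap controls only $\Phi\cap\Theta(\psi)$, yet compatibility is phrased in terms of the \emph{iterated} influence domain $\bigcup_{X_i\in\Theta(\psi)}Q_{R_i^{\mmax}(\psi)}(X_i)$, whose diameter is not bounded by the paper's condition alone since the secondary radii $R_i^{\mmax}(\psi)$ are unconstrained. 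Your reinforcement of $b$-validity (also bounding the iterated domain's diameter by $b$) is exactly what is needed to guarantee a finite conflict range and hence a.s.\ finite $N_0$; strong stabilization plus local finiteness justifies that the reinforced events $E_b$ still exhaust $E$, so this change is indeed harmless. In short, same route, more careful execution.
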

 \begin{proof}
	 Since $\Psi$ is not locally optimal, by stationarity, valid swaps occur with positive intensity. Hence, for some $b > 0$ also the process of valid swaps $\psi$ satisfying 1) the constraints on the diameter and number of elements in $\psi \setminus \Psi$ and 2) $\psi \Delta_{\R^d} \Psi > 0$ is of positive intensity. 
	 Fixing such $b$, then also the process of $b$-valid swaps has a positive intensity. Now, we can apply the same Mat\'ern-type construction as in \cite[Lemma 4]{thinning} to extract a family of compatible, $b$-valid swaps of positive intensity.
 \end{proof}

 Equipped with this key auxiliary result, we now conclude the proof of Theorem \ref{equiv1Thm}.
 
%
%
\begin{proof}[Proof of Theorem~\ref{equiv1Thm} under strong stabilization]
	If $\Psi$ is not locally optimal, then Lemma \ref{compSwapLem} provides a stationary marked point process $\mc S = \{(Y_j,\shift_{Y_j}\Psi_j)\}$ of positive intensity, of pairwise
	compatible, $b$-valid swaps.
	Let $ \Psi^*$ denote the marking obtained from $\Psi$ by implementing all swaps in $\mc S$. If $X_i \in \Phi$ is such that $X_i \not \in \Theta(\Psi^*)$, then strong stabilization gives that $\xi(\shift_{X_i}(\Psi^*)) = \xi(\shift_{X_i}(\Psi))$.
On the other hand, if $X_i\in \Theta(\Psi^*)$, then $X_i \in \Theta(\Psi_{j})$ for some $j$. Hence, using pairwise compatibility of $\{\Psi_j$\} (it is here that we use that twice iterated influence domains are disjoint) and, again, strong internal stabilization, we have for fixed $j$
	$$\sum_{X_i \in \Theta(\Psi_j)}\xi(\shift_{X_i}(\Psi^*)) = \sum_{X_i \in \Theta(\Psi_{j})}\xi(\shift_{X_i}(\Psi_{j})) > \sum_{X_i \in \Theta(\Psi_{j})}\xi(\shift_{X_i}(\Psi)),$$
with the last inequality holding because $\Psi_j$ is a valid swap of~$\Psi$.
Taking the expectation (more formally, using mass transport formula as shall be explained in the proof under internal and external stabilization)
and using the fact that $\mc S$ has positive intensity, we conclude that $\gh(\Psi^*) > \gh(\Psi)$, so that $\Psi$ is not intensity-optimal.
\end{proof}

%
%
We now assume internal and external stabilizations. As many arguments are very similar, we only explain in detail the steps that are substantially different. 
For any marking $\psi$ of $\Phi=\pi(\Psi)$, $X_i\in\Phi$, and any $\epsilon>0$
 we put 
 $$R_i^{\mmax, \e}(\psi) = \max\{R^\e(\shift_{X_i}(\psi)), R^\e(\shift_{X_i}(\Psi))\}$$
and let
$$\Theta^\e(\psi) = \bigcup_{X_i \in \psi \setminus \Psi} Q_{R_i^{\mmax, \e}(\psi)}(X_i)$$
 denote the \emph{$\e$-influence domain} of $\psi\setminus\Psi$.
 
For any $\delta, \epsilon>0$, $b,b',r<\infty$, a
\emph{$(\delta, \e, b, b', r)$-valid swap} is a valid swap $\psi$ such that 
 \begin{enumerate}
    \item\label{swap3} $\psi \Delta_{Q_{t}(c(\psi))}\Psi \ge \delta$ for all $t\ge 1/\delta$, where $c(\psi)$ is the center of the swap $\psi$,
	 \item\label{swap1} both the diameter and the number of elements of $\psi \setminus \Psi$ is at most $b$,
	 \item\label{swap2} both the diameter and the number of elements of $\Phi \cap \Theta^\e(\psi)$ is at most $b'$, 
	 \item\label{swap4}
	 $\Theta^\e(\psi)\subset Q_r(c(\psi))$ and $r>1/\delta$.
 \end{enumerate}
 For $r,\e,\e'>0$, two valid swaps $\psi_1, \psi_2$ are 
 $(\e, \e')$-compatible
 if
 $$\Big(\bigcup_{X_i \in \Theta^\e(\psi_1)} Q_{R_i^{\mmax, \e'}(\psi_1)}(X_i)\Big) \cap \Big(\bigcup_{X_i \in \Theta^\e(\psi_2)} Q_{{R_i^{\mmax, \e'}(\psi_2)}}(X_i)\Big) = \es.$$ 
 Then, the proof of Lemma \ref{compSwapLem} extends to the current setting to give the following result.
 \begin{lemma}
  \label{compSwap2Lem}
	 If $\Psi$ is not locally optimal, then  for some $\delta, \e,\e'>0$, $b,b',r<\infty$ with $\delta-2\e b-\e'b'>0$, 
	 there exists a stationary marked point process $\mc S = \{(Y_j,\shift_{Y_j}\Psi_j)\}$ 
	of positive intensity, such that 
	$\Psi_j\in\NMM$ are pairwise $(r, \e, \e')$-compatible, $(\delta, \e, b, b', r)$-valid swaps
	 of $\Psi$ and $Y_j=c(\Psi_i)\in\R^d$ are the centers of these swaps.
 \end{lemma}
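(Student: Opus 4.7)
The plan is to follow the Matérn-type thinning strategy of Lemma \ref{compSwapLem}, now adapted to the weaker internal/external stabilization regime. In the strong-stabilization setting, the influence of a swap is cut off sharply, so compatibility is enforced by simple disjointness of influence domains. Here, the influence only decays in $\e$, so the parameters $(\delta,\e,b,b',r,\e')$ must be tuned so that the positive score gain $\delta$ strictly dominates the accumulated stabilization errors and so that the twice-iterated $\e'$-influence domains have uniformly bounded diameter. The inequality $\delta - 2\e b - \e' b' > 0$ in the conclusion is precisely the budget constraint that emerges from this tuning.

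First, I would exploit non-local-optimality of $\Psi$ together with a countable union argument to pick $\delta_0 > 0$ such that, under $\P^0$, with positive probability there exists a valid swap $\psi$ centered at $o$ with $\psi\Delta_{\R^d}\Psi > 2\delta_0$. Since any valid swap differs from $\Psi$ in only finitely many points, I may enlarge $b$ so that restricting to swaps with $\#(\psi\setminus\Psi)\le b$ and spatial diameter of $\psi\setminus\Psi$ at most $b$ still has positive probability.

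Next I would telescope external stabilization along a sequence of single-mark transitions from $\Psi$ to $\psi$. Each transition contributes at most $\e$ to the sum of score differences outside its stabilization ball, so (up to a factor of $2$ accounting for the use of the max of the radii at the two endpoints), $|\psi\Delta_{\R^d\setminus\Theta^\e(\psi)}\Psi|\le 2\e b$. Choosing $\e$ with $2\e b < 2\delta_0$ and setting $\delta := 2\delta_0 - 2\e b$, condition (\ref{swap3}) is satisfied for every $t$ large enough that $Q_t(c(\psi))\supset\Theta^\e(\psi)$. Almost sure finiteness of the $\e$-stabilization radii then lets me enlarge $b'$ and $r > 1/\delta$ so that (\ref{swap2}) and (\ref{swap4}) hold as well, on a still-positive event. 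Introducing $\e' > 0$ for the once-iterated influence domain used in the definition of $(r,\e,\e')$-compatibility, and exploiting almost sure finiteness of $R^{\e'}$, I then shrink $\e'$ enough to force $\delta - 2\e b - \e' b' > 0$ and to confine the twice-iterated influence domain to a ball of some uniform radius $r'$, still on an event of positive probability.

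With a stationary marked point process $\{(Y_j,\shift_{Y_j}\Psi_j)\}$ of positive intensity of centers of such swaps in hand, I would then run a Matérn-type hard-core thinning at range $2r'$, exactly as in \cite[Lemma 4]{thinning}, to extract a stationary subfamily of pairwise $(r,\e,\e')$-compatible swaps whose intensity is still positive. The main obstacle is the coordination step in the third paragraph: one must show that a single event of positive probability simultaneously supports the choices of $b, b', r$, which rests on joint almost sure finiteness of the radii $R^\e$ and $R^{\e'}$ at each of the (finitely many) points of $\psi\setminus\Psi$, together with the quantitative tail bound from iterated external stabilization.
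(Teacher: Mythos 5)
Your overall plan---start from a positive-intensity stationary process of valid swaps, successively thin it to enforce conditions \eqref{swap1}--\eqref{swap4} with parameters respecting the budget $\delta - 2\e b - \e'b' > 0$, and conclude with a Mat\'ern-type hard-core thinning to extract a pairwise $(r,\e,\e')$-compatible subfamily of positive intensity---is the same as the paper's.

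That said, there is a genuine gap in your verification of condition \eqref{swap3}, caused by fixing $\delta$ too late. You establish $\psi\Delta_{Q_t(c(\psi))}\Psi \ge \delta$ only for $t$ large enough that $Q_t(c(\psi)) \supset \Theta^\e(\psi)$, whereas \eqref{swap3} requires the bound for \emph{every} $t \ge 1/\delta$; nothing in your construction forces the threshold at which $Q_t$ first engulfs $\Theta^\e(\psi)$ to lie below $1/\delta$, and condition \eqref{swap4} only yields $\Theta^\e(\psi)\subset Q_r$ for some $r > 1/\delta$, which is strictly weaker. Moreover, having defined $\delta := 2\delta_0 - 2\e b$ in terms of $\e$, you cannot simply shrink $\delta$ afterwards to push $1/\delta$ past that threshold without re-running your telescoping estimate, which would be circular. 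The clean fix---and the paper's route---is to fix $\delta$ first and without any appeal to external stabilization: a valid swap by definition has $\sum_i(\psi\Delta_{X_i}\Psi)_+ < \infty$ and $\psi\Delta_{\R^d}\Psi \in (0,\infty)$, which forces $\sum_i|\psi\Delta_{X_i}\Psi| < \infty$, so the partial sums $\psi\Delta_{Q_t(c(\psi))}\Psi$ converge to $\psi\Delta_{\R^d}\Psi>0$; one then picks a random $\delta(\psi)>0$ small enough that the partial sums stay above $\delta(\psi)$ on $t\ge 1/\delta(\psi)$, and a countable union argument produces a deterministic $\delta$ valid on a positive-intensity subset. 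After $\delta$ and $b$ are fixed, choose $\e$ with $\delta - 2\e b > 0$, then $b'$, then $\e'$ with $\delta - 2\e b - \e' b' > 0$, and finally $r$; in that order both \eqref{swap3} and the budget inequality hold by construction, and external stabilization is deferred to the proof of Theorem~\ref{equiv1Thm} where it belongs.
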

\begin{proof}
Since $\Psi$ is not locally optimal, by the stationarity, there exist a stationary marked point process $\mc S' = \{(Y_j,\shift_{Y_j}\Psi_j)\}$ of positive intensity, of valid swaps of $\Psi$. We shall proceed by the following successive refinement of $\mc S'$ 
to obtain $\mc S$ with swaps satisfying the required conditions. 
First, since $\Psi_i$ are valid swaps of $\Psi$, there exist
$\delta>0$ such the subset of $\mc S'$ satisfying
condition~\eqref{swap3} of $(\delta, \e, b, b', r)$-valid swap has positive intensity. We retain this subset of $\mc S'$. Next, there exist $b<\infty$ such that the retained subset of $\mc S'$ contains further subset of positive intensity satisfying condition~\eqref{swap1} of $(\delta, \e, b, b', r)$-valid swap. We retain this subset. At this stage we choose 
$\e>0$ such that $\delta-2\e b>0$.
Subsequently, for appropriate $b'<\infty$, within the current subset of swaps we can find a   subset of positive intensity satisfying condition~\eqref{swap2} of $(\delta, \e, b, b', r)$-valid swap and choose $\e'>0$ such that $\delta-2\e b-\e' b'>0$. Finally, let us take $r$ large enough to satisfy 
condition~\eqref{swap4} of $(\delta, \e, b, b', r)$-valid swap.
It remains to 
 apply the same Mat\'ern-type construction as in \cite[Lemma 4]{thinning} to extract from the obtained stationary family of 
 $(\delta, \e, b, b', r)$-valid swap the final subset $\mc S$  of pairwise $(r, \e, \e')$-compatible swaps.
\end{proof} 
 
 \begin{proof}[Proof of Theorem~\ref{equiv1Thm} under internal and external stabilization]
 Assume $\Psi$ intensity optimal with finite $\xi$-intensity but not locally optimal and let $\Psi^*$ be the marking of $\pi(\Psi)$ resulting from the implementation of all swaps $\Psi_j$ from a stationary process $\mc S =\{(Y_j,\shift_{Y_j}\Psi_j)\}$ 
 of valid swaps satisfying the assumptions of Lemma~\ref{compSwap2Lem}.
 
 Consider the difference of the expected scores of the typical point $X_0=o$ of $\Phi$ for the two markings: $\E^0[\xi(\Psi^*)]-\E^0[\xi(\Psi)]$,
 where $\E^0$ corresponds to the Palm
 probability given points of $\Phi$, on some suitable probability space on which $\Psi$ and $\mc S$ (and consequently $\Psi^*$) are defined and are jointly stationary. 
Since by the assumption $-\infty<\E^0[\xi(\Psi)]<\infty$
and $\E^0[\xi(\Psi^*)]\le \E^0[\xi(\Psi)]$ we have 
$$\E^0[\xi(\Psi^*)]-\E^0[\xi(\Psi)]=
\E^0[\xi(\Psi^*)-\xi(\Psi)]$$
and we will show that this latter expectation is strictly positive, thus leading to the contradiction.

The main difficulty lies in the fact that we have to evaluate the 
total impact of the implementation of all swaps $\Psi_j\in\mc S$ on one (typical) point of $\Phi$ using available bounds on the total impact of one (typical) swap 
of $\mc S$ on all points $x\in\Phi$. 
Since the number of implemented swaps is infinite, we cannot use a simple union bound, however in the stationary setting one can use the mass transport principle between points of $\Phi$ and $\mc S$ (which is a subset of $\Phi$).
 
In this regard, we shall need to enumerate points $\{X_i\}_{i \in \Z}$ of $\Phi$ in a covariant way. By this we mean that
the order (not the index) of points is invariant with respect to translation by any vector $y\in\R^d$. Such an ordering always exists, in case of dimension $d>1$ perhaps at the price of an extra i.i.d.~marking of $\Phi$ by uniform random variables, see~\cite{timar2004tree}.
Without loss of generality, the point $X_0$ is the one which is closest to the origin~$o$ of $\R^d$ according to some total order on $\R^d$. A given covariant enumeration of points of $\Phi$ induces
the corresponding covariant ordering 
of the swap centers $Y_j=c(\Psi_j)$.

Denote by $\overline\Psi_j$ the marking of $\Phi:=\pi(\Psi)$ resulting from the implementation of all swaps $\Psi_k$ of $\Psi$ with $k\le j$ (this is an infinite subset of swaps).

For any point index $i\in\Z$ ($X_i\in\Phi$) and a swap index $j\in\Z$ (relative to $X_i$, i.e., considered on the configuration $\shift_{X_i}\Phi$
) denote 
\begin{align*}
A(i,j)&:=\xi(\shift_{X_i}\Psi_j)-\xi(\shift_{X_i}\Psi),\\
B(i,j)&:=\xi(\shift_{X_i}\Psi^*)-\xi(\shift_{X_i}\Psi_j),\\
C(i,j)&:=\xi(\shift_{X_i}\overline\Psi_j)-\xi(\shift_{X_i}\overline\Psi_{j-1}).
\end{align*}
Note all terms in the right-hands side of the above expressions are finite, possibly except $\xi(\shift_{X_i}\Psi^*)$.
We have for any $X_i\in\Phi$, $j\in\Z$
\begin{align*}
A(i,j)+B(i,j)&=\xi(\shift_{X_i}\Psi^*) -
\xi(\shift_{X_i}\Psi)\\
\noalign{and}
\sum_{j \in \Z} C(i,j)&=
\lim_{j\to\infty}\xi(\shift_{X_i}\overline\Psi_j) -
\lim_{j\to-\infty}\xi(\shift_{X_i}\overline\Psi_j)
= \xi(\shift_{X_i}\Psi^*) -
\xi(\shift_{X_i}\Psi),
\end{align*}
where the last equality (continuity
of $\xi$ on $\overline\Psi_{j}\mapsto_{j\to\infty}\Psi^*$ and $\overline\Psi_j\mapsto_{j\to-\infty}\Psi$) follows from the internal stabilization of $\xi$.
Moreover,
\begin{align*}
\E^0[\xi(\Psi^*)-\xi(\Psi)]
&= \E^0\Bigl[\sum_{Y_j\in\mc S}
\ind(o\in Q_r(Y_j)) A(0,j)\\
&\hspace{6em}+\ind(o\in \Theta^\e(\Psi_j))B(0, j)\\
&\hspace{6em}-\ind(o\in Q_r(Y_j)\setminus \Theta^\e(\Psi_j)) A(0,j)\\\
&\hspace{6em}+
\ind(o\in\R^d\setminus\Theta^\e(\Psi^*) ) C(0,j)\Bigr].
\end{align*}
Denote by $\lambda_{\mc S}$ the intensity of the process of swaps
and $\E^0_{\mc S}$ the Palm expectation with respect to the process of swap centers. Recall $\lambda_{\mc S}>0$.
By the mass transport formula 
between $\Phi$ and $\mc S$ (cf.~\cite[(10.2.5)]{blaszczyszyn2017lecture}),
\begin{align*}
\frac{\lambda}{\lambda_{\mc S}} \E^0[\xi(\Psi^*)-\xi(\Psi)]
&=\E^0_{\mc S} \Bigl[\sum_{X_i\in\Phi}
\ind(X_i\in Q_r) A(i,0)\\
&\hspace{6em}+\ind(X_i\in \Theta^\e(\Psi_0))B(i, 0)\\
&\hspace{6em}-\ind(X_i\in Q_r\setminus \Theta^\e(\Psi_0)) A(i,0)\\\
&\hspace{6em}+
\ind(X_i\in\R^d\setminus\Theta^\e(\Psi^*) ) C(i,0)\Bigr].
\end{align*}
Observe
\begin{align*}
\sum_{X_i\in\Phi}
\ind(X_i\in Q_r) A(i,0)&\ge \delta\phantom{-b'}\quad \text{by condition~\eqref{swap3} of $(\delta, \e, \e', b, b', r)$-valid swap,}\\
\sum_{X_i\in\Phi}\ind(X_i\in \Theta^\e(\Psi_0))B(i, 0)&\ge -\e' b'\quad\text{by $\e'$-internal stabilization},\\ 
\sum_{X_i\in\Phi}\ind(X_i\in Q_r\setminus \Theta^\e(\Psi_0)) A(i,0)&\ge -\e b\quad\text{by $\e$-external stabilization},\\
\sum_{X_i\in\Phi}\ind(X_i\in\R^d\setminus\Theta^\e(\Psi^*) ) C(i,0)&\ge -\e b\quad\text{by $\e$-external stabilization}.
\end{align*}
Consequently,
$$\lambda \E^0[\xi(\Psi^*)-\xi(\Psi)]\ge
\lambda_{\mc S}( \delta -2\e b-\e'b')>0,$$
which completes the proof of Proposition~\ref{equiv1Thm}
in the case of internally and externally stabilizing score function.
 \end{proof}

\subsection{From local optimality to intensity optimality}
\label{ss.loc-to-int}
To pass from intensity-optimality to local optimality, we take the strategy of \cite[Theorem 3]{thinning}: We take  $\Psi_{\ms i}$ and $\Psi_{\ms l}$ being, respectively, intensity- and locally optimal markings (coupled on some probability space as stationary markings of the common base process~$\Phi$). Replacing a finite number of marks of $\Psi_{\ms l}$ by the respective marks of $\Psi_{\ms i}$ and leveraging the local optimality of $\Psi_{\ms l}$, we shall show that 
$\Psi_{\ms l}$ and $\Psi_{\ms i}$ must have the same intensity.
Again, possible long-range dependencies require more careful arguments involving stabilization radii.
In this regard, let us denote by 
$$\Xis(B) = \{(X_i, M_i) \in \Phi:\, Q_{\max\{R^\e(\shift_{X_i}\Psi_{\ms i}), R^\e(\shift_{X_i}\Psi_{\ms l})\}}(X_i) \subset B \}$$
the set of all points exerting influence at most $\e$ outside a Borel set $B \subset \R^d$.
Also, to lighten notation in the proof, we drop the reference to $\xi$ in $\Delta_{\xi, B}$.
%
%
\begin{proof}[Proof of Theorem~\ref{equiv2Thm}, neutral marks]
For given (small) $\e>0$ and (large) $r<\infty$ define a new marking $\Psi^{\e, r}$ by taking the marks of $\Psi_{\ms i}$ in $\Xis(Q_r)$, the marks of $\Psi_{\ms l}$ in $\Xis(Q_r^c)$, and neutral marks otherwise. Note, using neutral marks guarantees that $\Psi^{\e, r}$ is admissible. We will need the following properties of $\Psi^{\e, r}$, which we prove later.
\begin{align}
\Phi\bigl(\R^d\setminus\Xi^\e(Q_r^c)\bigr)&<\infty,\label{e.finite-mod}\\
\sum_{X_i\in\Phi}(\Psi^{\e, r}\Delta_{X_i}\Psi_{\ms l})_+&<\infty,
\label{e.finite-D}\\
 \E[ \Psi^{\e, r}\Delta_{Q_r \setminus \Xis(Q_r)} \Psi_{\ms i}]&=O(r^{d-1}) \quad\text{as $r\to\infty$},
 \label{e.DQ-surface-order}\\
 \E[\Psi^{\e, r}\Delta_{Q_r^c \setminus \Xis(Q_r^c)} \Psi_{\ms l}]&=O(r^{d-1}) \quad\text{as $r\to\infty$}.
 \label{e.DQc-surface-order}
\end{align}
By~\eqref{e.finite-mod}, \eqref{e.finite-D},
and local optimality of~$\Psi_{\ms l}$ we have $\Psi^{\e, r} \Delta_{\R^d} \Psi_{\ms l} \le 0$. Consequently
	\begin{align*}
		\E[\Psi_{\ms l} \Delta_{Q_r} \Psi_{\ms i}] &= \E[\Psi_{\ms l} \Delta_{\R^d} \Psi^{\e, r}]\\
		&\phantom= +\E[\Psi^{\e, r} \Delta_{\Xis(Q_r)} \Psi_{\ms i}] + \E[ \Psi^{\e, r}\Delta_{Q_r \setminus \Xis(Q_r)} \Psi_{\ms i}] \\
		&\phantom= + \E[\Psi^{\e, r}\Delta_{Q_r^c \setminus \Xis(Q_r^c)} \Psi_{\ms l}] + \E[\Psi^{\e, r} \Delta_{\Xis(Q_r^c)} \Psi_{\ms l}]
				\\
	&\ge \E[\Psi^{\e, r} \Delta_{\Xis(Q_r)} \Psi_{\ms i}] + \E[ \Psi^{\e, r}\Delta_{Q_r \setminus \Xis(Q_r)} \Psi_{\ms i}] \\
		&\phantom= + \E[\Psi^{\e, r}\Delta_{Q_r^c \setminus \Xis(Q_r^c)} \Psi_{\ms l}] + \E[\Psi^{\e, r} \Delta_{\Xis(Q_r^c)} \Psi_{\ms l}],
	\end{align*}
where the first equality is justified by 
the subsequent analysis of the terms.

We begin by analyzing $\E[\Psi^{\e, r} \Delta_{\Xis(Q_r)} \Psi_{\ms i}]$.
 If instead of putting neutral marks in ${Q_r \setminus \Xis(Q_r)}$ we had used the marks from $\Psi_{\ms i}$, then internal stabilization would imply immediately that the absolute value of this term was bounded by $\e \lambda r^d$. Now, the defining property \eqref{neutralEq} of the neutral marks shows that 
$$
\E[\Psi^{\e, r} \Delta_{\Xis(Q_r)} \Psi_{\ms i}]\ge -\e \lambda r^d.
$$
 Using a similar argument for the last term
 and leveraging external stabilization implies $$
 \E[\Psi^{\e, r} \Delta_{\Xis(Q_r^c)} \Psi_{\ms l}]\ge -\e\lambda r^d.
 $$
 Asymptotic properties~\eqref{e.DQ-surface-order} and~\eqref{e.DQc-surface-order} of the remaining two terms imply that 
 these terms are also bounded from below by $-\e \lambda r^d$ for $r$ large enough. Collecting bounds of all terms 
$$\frac1{\lambda r^d} \E[\Psi_{\ms l} \Delta_{Q_r} \Psi_{\ms i}]\ge -4\e$$
for $r$ large enough, thus showing that 
$\Psi_{\ms l}$ has also optimal intensity.
 
In order to complete the proof, we show properties~\eqref{e.finite-mod}--\eqref{e.DQc-surface-order} using appropriate moment assumptions on the score function and the stabilizing radius. Specifically, let 
$$\bar R^\e_i:=\max\{R^\e(\shift_{X_i}\Psi_{\ms i}), R^\e(\shift_{X_i}\Psi_{\ms l})\}.$$
Then, \eqref{e.finite-mod} follows from the finiteness of the expectation
\begin{align*}
\E\Bigl[\#\{X_i\in\Phi:\, X_i\in Q_{r+\bar R^\e_i})\Bigr]&=
\lambda \E^0[(r+\bar R^\e_0)^d]<\infty,
\end{align*}
where we have used the Campbell's formula and
the assumption that $R^\e_0(\Psi_{\ms i})$ and $R^\e_0(\Psi_{\ms l})$ (and consequently 
$(r+\bar R^\e_0)$ have finite $d$th moment under $\E^0$. 

For~\eqref{e.finite-D}, we denote by $\Xi_r := \bigcup_{X_i \in \R^d \setminus \Xis(Q_r^c)} Q_{\bar R_i^\e}(X_i)$ the domain of influence of the points outside $\Xis(Q_r^c)$ and write
\begin{align}\label{e.finite-mod-Xir}
\sum_{X_i\in\Phi}(\Psi^{\e, r}\Delta_{X_i}\Psi_{\ms l})_+
=\sum_{X_i\in\Phi\cap \Xi^c_r}(\Psi^{\e, r}\Delta_{X_i}\Psi_{\ms l})_+
+ \sum_{X_i\in\Phi\cap\Xi_r}(\Psi^{\e, r}\Delta_{X_i}\Psi_{\ms l})_+.
\end{align}
By the external stabilization and (already proved)~\eqref{e.finite-mod} we can bound the first term in~\eqref{e.finite-mod-Xir} by
	$$\sum_{X_i \in \Xi_r^c}(\Psi^{\e, r} \Delta_{X_i} \Psi_{\ms l})_+ \le \e \Phi( \R^d\setminus \Xis(Q_r^c))<\infty.$$
The second term in~\eqref{e.finite-mod-Xir} is finite again by~\eqref{e.finite-mod-Xir}, 
the finiteness of the stabilizing radii $R_i^\e(X_i)<\infty$, and the properties of the score function: $\xi(\shift_{X_i}\Psi_{\ms l})>-\infty$ (since $\Psi_{\ms l}$ is admissible), and $\xi(\shift_{X_i}\Psi^{\e,r})<\infty$
(true for any marking).

Properties~\eqref{e.DQ-surface-order} and~\eqref{e.finite-mod} say that  the 
expected differences between the scores 
of $\Psi^{\e, r}$ and $ \Psi_{\ms i}$,
as well as $\Psi^{\e, r}$ and $ \Psi_{\ms l}$,
aggregated over $Q_r \setminus \Xis(Q_r)$ and $Q_r^c \setminus \Xis(Q_r^c)$, respectively, scale as the $(d-1)$-dimensional volume of the boundary of $Q_r$, i.e. $r^{d-1}$. Indeed, using the
 majorant $\overline \xi_i:=\overline\xi(\shift_{X_i}\Phi)$ to bound the scores of $\Psi_{\ms l}$ and $\Psi^{\e, r}$ (both are admissible)
we have for~\eqref{e.DQc-surface-order}
\begin{align*} |\E[\Psi^{\e, r}\Delta_{Q_r^c \setminus \Xis(Q_r^c)} \Psi_{\ms l}]|&\le
\E\Bigl[\sum_{X_i\in\Phi\cap(Q_r^c \setminus \Xis(Q_r^c))}
|\Psi^{\e, r}\Delta_{X_i} \Psi_{\ms l}|\Bigr]\\
&\le 2\E\Bigl[\sum_{X_i\in\Phi\cap(Q_r^c \setminus \Xis(Q_r^c))}
\overline \xi_i\Bigr]\\
&\le 2\E\Bigl[\sum_{X_i\in\Phi \cap Q_{r+\bar R^\e_i}}
\bar\xi_i\Bigr]\\
&=
2\lambda \E^0[\bar\xi_0(r+\bar R^\e_0)^d]<\infty,
\end{align*}
where the final bound holds by 
the H\"older's inequality 
provided
$\E^0[\bar\xi_0^{p}]<\infty$ and 
$\E[(\bar R^\e_0)^{dp(p-1)}]<\infty$ for some $p>1$.
Similar arguments lead to~\eqref{e.DQ-surface-order}.
This completes the proof of Theorem~\ref{equiv2Thm} with neutral marks.
\end{proof}

\begin{proof}[Proof of Theorem~\ref{equiv2Thm} score difference majorant] This time, we assume negative score functions and construct a new marking $\Psi^r$ by taking the marks of $\Psi_{\ms i}$ in $Q_r$, and the marks of $\Psi_{\ms l}$ in $Q_r^c$. 
Note $\Psi^r$ is again an admissible marking, this time by the property~\eqref{e.finite-energy} and we 
shall be able  
to prove the following counterparts of~\eqref{e.finite-D}--\eqref{e.DQc-surface-order-energy}
relying this time on the score difference majorant~$\ol\xi_i$.
\begin{align}
\sum_{X_i\in\Phi}(\Psi^r \Delta_{X_i}\Psi_{\ms l})_+&<\infty,
\label{e.finite-D-energy}\\
 \E[ \Psi^{r}\Delta_{Q_r \setminus \Xis(Q_r)} \Psi_{\ms i}]&=O(r^{d-1}) \quad\text{as $r\to\infty$},
 \label{e.DQ-surface-order-energy}\\
 \E[\Psi^{r}\Delta_{Q_r^c \setminus \Xis(Q_r^c)} \Psi_{\ms l}]&=O(r^{d-1}) \quad\text{as $r\to\infty$}.
 \label{e.DQc-surface-order-energy}
\end{align}
	Assuming that the above properties hold we have 
 $\Psi^{\e, r} \Delta_{\R^d} \Psi_{\ms l} \le 0$ and consequently
	\begin{align*}
		\E[\Psi_{\ms l} \Delta_{Q_r} \Psi_{\ms i}] 
	&\ge \E[\Psi^{r} \Delta_{\Xis(Q_r)} \Psi_{\ms i}] + \E[ \Psi^{r}\Delta_{Q_r \setminus \Xis(Q_r)} \Psi_{\ms i}] \\
		&\phantom= + \E[\Psi^{r}\Delta_{Q_r^c \setminus \Xis(Q_r^c)} \Psi_{\ms l}] + \E[\Psi^{r} \Delta_{\Xis(Q_r^c)} \Psi_{\ms l}].
	\end{align*}	This time, internal and external stabilization imply directly that the first and last summand are at least $-\e \lambda r^d$. Using the boundary-order scaling of the remaining two summands we show, as before, that $\Psi_{\ms l}$ is necessarily intensity-optimal.
	
It remains to justify properties~\eqref{e.finite-D-energy}--\eqref{e.DQc-surface-order-energy}.
For~\eqref{e.finite-D-energy}
exactly the same arguments can be used as 
for~\eqref{e.finite-D}. (However it is more natural in this case to consider $\Xi_r:= \bigcup_{X_i \in Q_r} Q_{\bar R_i^\e}(X_i)$.)
The main modification of the argument appears in the 
proof of~\eqref{e.DQ-surface-order-energy} and~\eqref{e.DQc-surface-order-energy}, where the first crucial step is done by using score difference majorant satisfying~\eqref{e.finite-energy} (instead of the majorant of the scores)
$$\E\Bigl[\sum_{X_i\in\Phi\cap(Q_r^c \setminus \Xis(Q_r^c))}
|\Psi^{r}\Delta_{X_i} \Psi_{\ms l}|\Bigr]\\
\le \E\Bigl[\sum_{X_i\in\Phi\cap(Q_r^c \setminus \Xis(Q_r^c))}
\overline \xi_i\Bigr].$$
and similarly for the other property.
This concludes the proof of Theorem~\ref{equiv2Thm}.
\end{proof}

	\section*{Acknowledgements}
	The work of CH was funded by The Danish Council for Independent Research—Natural Sciences under Grant DFF -- 7014‐00074 (Statistics for point processes in space and beyond) and by the Centre for Stochastic Geometry and Advanced Bioimaging (funded by the Villum Foundation under Grant 8721). The work of BB was partially supported  by IFCAM project "Geometric statistics of stationary point processes".	We are grateful to G\"unter Last for suggesting us to consider the matching example and to Yogeshwaran D. for his comments on the stabilization techniques.  

\bibliography{mark}
\bibliographystyle{abbrv}

\appendix

\section{}
\label{appendix}

Let $\K$ be a Polish space and $\NK$ the space of boundedly finite Borel measures on $\R^d\times\K$ with the vague topology (cf~\cite[Section~4.1]{randMeas}). The law $\mc L(\Psi)$ of a random measure $\Psi$ on $\R\times\K$ is its probability distribution on $\NK$ with the respective Borel $\sigma$-algebra (equal to the corresponding evaluation $\sigma$-algebra).
A sequence of random measures $\Psi_n$ on $\R\times\K$ converges {\em (vaguely) in distribution} to $\Psi$, in symbols $\Psi_n{\buildrel vd\over\longrightarrow} \Psi$, if the sequence of distributions $\mc L(\Psi_n)$ 
weakly converges to $\mc L(\Psi)$ with respect to the vague topology on $\NK$.
 We shall prove the following result. Note, our statement~3 convergence of Palm probabilities is related to \cite[Theorem~10.4]{randMeas} but does not seem to follow from this result concerning non-stationary, non-marked point processes.

\begin{lemma}[Tightness and convergence of general stationary markings]\label{markStabLem-general}
Let $\mc T$ be the family of (distributions of) stationary marked point process on $\R^d\times\mathbb K$ 
with uniformly bounded intensities, \begin{equation}\label{e.bounded-intensity-lambda0}\sup_{\Psi \in\mc T}\E[\Psi(Q_1 \times \mathbb K)]<\infty.
\end{equation}
\begin{enumerate}
\item \label{i.lemma} If $\K$ is {locally compact}, then $\mc T$ is tight as a family of probability measures on~$\NK$. 
\item\label{ii.lemma} The family $\mc T$ is closed with respect to convergence in distribution.
\item\label{iii.lemma} Assume $\Psi_n{\buildrel vd\over\longrightarrow} \Psi$ 
for $\Psi_n\in\mc T$ and denote by $\lambda_n,\lambda$ the intensities of $\Psi_n$ and $\Psi$, respectively. If 
 $\lambda_n\longrightarrow\lambda$, then $\Psi_n^0{\buildrel vd\over\longrightarrow} \Psi^0$, where $\Psi_n^0,\Psi^0$ denote the respective Palm distributions.
\end{enumerate}
\end{lemma}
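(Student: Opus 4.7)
The plan is to handle the three statements in sequence, with \eqref{iii.lemma} being the substantive step.

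For \eqref{i.lemma}, I would invoke the standard tightness criterion for random measures in the vague topology on the locally compact second countable space $\R^d\times\K$: the family is tight provided $\sup_{\Psi\in\mc T}\E[\Psi(C)]<\infty$ for every compact $C$. By stationarity this follows from~\eqref{e.bounded-intensity-lambda0}, since $\E[\Psi(C)]\le|Q|\sup_{\Psi\in\mc T}\E[\Psi(Q_1\times\K)]$ for any bounded cube $Q$ containing the $\R^d$-projection of $C$, and Markov's inequality then supplies uniform tail bounds on $\Psi(C)$. For \eqref{ii.lemma}, stationarity passes to a vague distributional limit through convergence of Laplace functionals, since shifts act continuously on $C_K^+(\R^d\times\K)$; the intensity bound for the limit follows by Portmanteau/Fatou applied to relatively compact sets $Q_1\times K'$ with $K'\subset\K$ compact (having boundary of $\E[\Psi]$-measure zero), combined with monotone convergence as $K'\uparrow\K$.

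The main work is \eqref{iii.lemma}. My approach is to prove convergence of the Palm Laplace functionals $\E^0_n[e^{-\int\phi\,\d\Psi_n}]$ for every $\phi\in C_K^+(\R^d\times\K)$, which characterizes the vague distributional convergence $\Psi_n^0\to\Psi^0$. Fixing $g\in C_K^+(\R^d)$ with $\int g\,\d x=1$, the refined Campbell--Mecke formula yields
\begin{equation*}
\lambda_n\,\E^0_n\bigl[e^{-\int\phi\,\d\Psi_n}\bigr]=\E\bigl[\Lambda_\phi(\Psi_n)\bigr],\qquad\Lambda_\phi(\psi):=\sum_i g(X_i)\,e^{-\int\phi(y-X_i,k)\,\d\psi(y,k)}.
\end{equation*}
The functional $\Lambda_\phi$ is continuous on $\NK$ in the vague topology but unbounded in the mark variable, so I truncate it by inserting a continuous compactly supported cutoff $\chi_m(k)\uparrow 1$, producing $\Lambda_\phi^m$. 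The resulting truncation error is bounded by $\lambda_n(1-\mu_n(\chi_m))$, where $\mu_n$ denotes the Palm mark distribution of $\Psi_n$.

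The hard step, and the main obstacle of the proof, is to control this tail uniformly in $n$ by establishing tightness of $\{\mu_n\}$ and in fact its weak convergence to $\mu$. This is where both hypotheses enter crucially: choosing compact sets $K_m\uparrow\K$ with $\mu(\partial K_m)=0$, the Portmanteau theorem for random measures applied to the relatively compact set $Q_1\times K_m$ gives $\Psi_n(Q_1\times K_m)\to\Psi(Q_1\times K_m)$ in distribution, and Fatou yields $\liminf_n\lambda_n\mu_n(K_m)\ge\lambda\mu(K_m)$; dividing by $\lambda_n\to\lambda$ and exhausting $\K$ rules out any escape of mass to infinity in the mark variable, forcing $\mu_n\to\mu$ weakly. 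With this in hand, the truncation error is uniformly small for $m$ large. For fixed $m$, $\Lambda_\phi^m$ is continuous and dominated by $\int g\chi_m\,\d\Psi_n$, whose expectation $\lambda_n\mu_n(\chi_m)$ converges to $\lambda\mu(\chi_m)$; convergence of $L^1$-norms combined with distributional convergence of nonnegative variables delivers uniform integrability, so that $\E[\Lambda_\phi^m(\Psi_n)]\to\E[\Lambda_\phi^m(\Psi)]$. A standard approximation argument (first $m$ large, then $n$ large) followed by division by $\lambda_n\to\lambda>0$ concludes.
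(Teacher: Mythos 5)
Your route to part~\eqref{iii.lemma} is genuinely different from the paper's: instead of testing against compactly supported functions of the typical \emph{universal mark} $\overline K_0 = \shift_{X_0}\Psi$ (the paper's trick, which reduces Palm convergence to the convergence of typical marks in the enlarged mark space $\N_\K$), you work directly with Palm Laplace functionals and pull them back to the stationary expectation via the refined Campbell formula, $\lambda_n\,\E^0_n[e^{-\int\phi\,\d\Psi_n}]=\E[\Lambda_\phi(\Psi_n)]$. Your treatment of the mark tightness of $\{\mu_n\}$ (Portmanteau on the continuity set $Q_1\times K_m$ plus Fatou, then division by $\lambda_n\to\lambda$) plays the same role as the paper's ``convergence of typical marks'' step, and your handling of parts~\eqref{i.lemma} and~\eqref{ii.lemma} matches the paper's in substance.

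There is, however, a genuine gap at the center of your argument: the continuity of $\Lambda_\phi$ (and of its truncation $\Lambda_\phi^m$) on $\NK$ is asserted but not established, and this is precisely the nontrivial content that the paper proves explicitly. As stated, the claim that $\Lambda_\phi$ itself is vaguely continuous is false: $\Lambda_\phi(\psi)=\sum_i g(X_i)\,e^{-\int\phi(\cdot-X_i,\cdot)\,\d\psi}$ sums over all points of $\psi$ in $\supp g\times\K$, and points with marks escaping to infinity in $\K$ are invisible to the vague topology yet each contributes an amount of order $g(X_i)>0$; so vaguely close configurations need not have close $\Lambda_\phi$-values. After the cutoff $\chi_m$, $\Lambda_\phi^m$ sums over a genuinely compact window $\supp g\times\supp\chi_m$, but the summand still depends on the \emph{global} configuration through the shift $\shift_{X_i}\psi$ appearing in the exponent; continuity of $\psi\mapsto e^{-\int\phi\,\d\shift_x\psi}$ \emph{jointly} in the point $x$ and the configuration $\psi$ is exactly what the paper proves in its argument for the continuity of the universal-marking map $\psi\mapsto\overline\psi$, via uniform continuity of $h(x,\mu):=f(x,\shift_x\mu)$ and the sandwich bounds \eqref{e.continuity-lemma-bounds}. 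What you need is an a.e.\ (with respect to $\mc L(\Psi)$) continuity statement for $\Lambda_\phi^m$ on $\NK$, which holds only outside a null set of configurations (those charging the boundary of $\supp g\times\supp\chi_m$, or the boundary of the support of the translated $\phi$), and an argument to that effect — in the spirit of the paper's universal-marking lemma — needs to be supplied before the continuous-mapping / uniform-integrability step can be invoked.
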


\begin{proof}
Set $\lambda_0 := \sup_{\Psi \in\mc T}\E[\Psi(Q_1\times \mathbb K)]$. Note that for any non-negative, bounded, boundedly supported Borel function $h$ on $\R^d\times\K$, any $M<\infty$, and 
any $\Psi\in\mc T$, we have
\begin{align*}
\P\left(\,\int_{\R^d\times\K}h\,\d\Psi> M\,\right)
&\le \frac1M \E\left[\int_{\R^d\times\K}h\,\d\Psi\right]\\
&\le \frac{\lambda_0}M \int_{\R^d}\E^0_\Psi[h(x,K_0)]\,\d x\\
&\le \frac{\lambda_0}M C\,||h||_{\infty},
\end{align*}
where $C<\infty$ is the Lebesgue measure of the projection of the support of $h$ on $\R^d$.
Thus, for given $h$, taking $M$ large enough one can upper-bound the probability $\P(\,\int h\,\d\Psi> M\,)$ uniformly for all $\Psi\in\mc T$. By~\cite[Proposition~11.1.VI]{pp1}
and the { local compactness of $\K$} (and hence of $\R^d\times\K$) we conclude that $\mc T$ is tight, thus proving statement~\eqref{i.lemma}.

Assume now $\Psi_n{\buildrel vd\over\longrightarrow} \Psi$.
The stationarity of $\Psi$ follows from the continuity of the shift operator $\shift_y$ with respect to the vague topology on $\NK$.
To prove that $\Psi$ has intensity bounded by $\lambda_0$, let $g$ be a 
continuous, non-negative, compactly supported Borel function on $\R^d$ and $g'_k$ continuous,
non-negative, compactly supported Borel functions on $\K$, such that $1_{K'_k}\le g'_k\le 1$, for some sequence of compact $K'_k\subset\K$ increasing to~$\K$. The convergence 
 $\Psi_n{\buildrel vd\over\longrightarrow} \Psi$ implies
 convergence in distribution 
 of the random variables 
 $\int gg'_k\,\d \Psi_n{\buildrel d\over\longrightarrow}_{n\to\infty}
 \int gg'_k\,\d \Psi$ (Theorem~\cite[Theorem~4.2~(ii)]{randMeas}) for all $k\ge1$. By Fatou's lemma
$$\E\Bigl[\int gg'_k\,\d \Psi\Bigr]\le \liminf_{n\to\infty}\E\Bigl[\int gg'_k\,\d \Psi_n\Bigr]\le
\lambda_0\int_{\R^d}g\,\d x,$$
where the last inequality is due to~\eqref{e.bounded-intensity-lambda0}.
Sending $k\to\infty$, by the monotone convergence theorem 
$$\E\Bigl[\int g\,\d \Psi\Bigr]\le \lambda_0\int_{\R^d}g\,\d x,$$
which proves that the intensity of $\Psi$ is bounded by $\lambda_0$ and completes the proof of statement~\eqref{ii.lemma}.

For the last statement, we first show that under the respective Palm distributions, the typical marks of $\Psi_n$ converge in distribution to those of $\Psi$.
In this regard consider continuous,
non-negative, compactly supported Borel functions $g, h$ on $\R^d$ and $\K$, respectively, with $\int_{\R^d}g(x)\,\d x=1$.
Denote by $\E^0_{n}$ and $\E^0$ the expectations with respect to Palm probabilities of $\Psi_n$ and $\Psi$, respectively. 
Then,
$$\E^0_n[h(K_0)]=\frac1{\lambda_n}
\E\Bigl[\int gh\,\d\Psi_n\Bigr]$$
and similarly for $\E^0[h(K_0)]$.
The assumption $\Psi_n{\buildrel vd\over\longrightarrow} \Psi$ implies
$\int gh\,\d\Psi_n{\buildrel d\over\longrightarrow}\int gh\,\d\Psi$. Since, additionally, $\lambda_n\to\lambda$, we have uniform integrability and therefore
$$\E\Bigl[\int gh\,\d\Psi_n\Bigr]\longrightarrow\E\Bigl[\int gh\,\d\Psi\Bigr].$$
This, together with the assumption $\lambda_n\longrightarrow \lambda$ proves
that $\E^0_n[h(K_0)]\longrightarrow\E^0[h(K_0)]$ thus completing the proof of the convergence in Palm distribution of the typical marks.

In order to complete the proof of statement~\eqref{iii.lemma} consider the universal marks of $\Psi_n$ and $\Psi$, i.e., the original realizations centered at the respective points. In symbols,
for any point process on $\R^d\times\K$ define the universal mark of its ground point $X_i\in\R^d$ by $\overline K_i:=\shift_{X_i}\Psi$.
The universal marks are elements of the Polish space $\overline\K:=\NK$ and the ground process $\{X_i\}$ of $\Psi$ marked by the universal marks form a point process $\overline\Psi$ 
on $\N_{\overline\K}$. Moreover, the Palm distribution of the typical universal mark,
$\overline K_0=\shift_0\Psi=\Psi$ coincides with the Palm distribution of $\Psi$. It is enough to observe now (see arguments below) that the {\em universal marking mapping}
 $\Psi\mapsto\overline\Psi$ from $\NK$ to $\N_{\overline\K}$ is continuous with respect the vague topology, which makes $\Psi_n{\buildrel vd\over\longrightarrow} \Psi$ imply $\overline\Psi_n{\buildrel vd\over\longrightarrow} \overline\Psi$, and use the previously proved convergence of the typical marks to conclude statement~\eqref{iii.lemma} of Lemma~\ref{markStabLem-general}.
 
 To justify the aforementioned continuity of the universal marking consider a continuous, compactly supported, real valued function $f$ on $\R^d\times\N_{\overline\K}$.
Note the function $h(x,\mu):=f(x,\shift_x\mu)$ ($x\in\R^d$, $\mu\in\N_{\overline\K}$) is also continuous and compactly supported and, as such, it is uniformly continuous
(say, with respect to the metric 
$d((x,\mu), (y,\nu)):=|x-y|+d_v(\mu,\nu)$, where $d_v$ is some metric on $\N_{\overline\K}$ inducing the vague topology cf.~\cite[Lemma~4.6]{randMeas}). 

Consider $\psi_n\to\psi$ vaguely in $\N_\K$.
For $\epsilon>0$, by the uniform continuity of $h$,
$|h(x,\shift_x\psi_n)-h(x,\shift_x\psi)|\le \epsilon$ for $n$ large enough, uniformly in $x\in\R^d$. Thus, for 
$\overline \psi_n$ and $\overline\psi$ resulting from the universal marking of $\psi_n$ and $\psi$, respectively, we have 
\begin{equation}\label{e.continuity-lemma-bounds}
\int_D (h(x,\psi)-\epsilon)\,\psi_n(\d x\times\K)\le \int f\,\d \overline \psi_n\le 
\int_D (h(x,\psi)+\epsilon)\,\psi_n(\d x\times\K),
\end{equation}
where $D$ is the projection of the support of~$f$ on $\R^d$.
Now, note that
$$\lim_{n \to \infty}\int h(x,\psi)\psi_n(\d x\times\K)= 
\int h(x,\psi)\psi(\d x\times\K) = \int f(x,\shift_x\psi)\psi(\d x\times\K) = \int f\,\d \overline\psi.$$
Hence, the $\lim\inf_{n}$ and $\lim\sup_{n}$ of the 
lower and upper bound in~\eqref{e.continuity-lemma-bounds} are equal, respectively, to 
$\int f\,\d \overline\psi+A\epsilon$ and 
$\int f\,\d \overline\psi-A\epsilon$, where
$A:=\lim\sup_n\psi_n(D\times\K)$ is finite since $\psi_n\to\psi$. This completes our proof of the continuity of the universal marking and Lemma~\ref{markStabLem-general}.
\end{proof}

\end{document}